\documentclass[12pt, reqno]{amsart}
\usepackage{amsmath, amsthm, amscd, amsfonts, amssymb, graphicx, color}
\usepackage{enumerate}
\usepackage[bookmarksnumbered, colorlinks, plainpages]{hyperref}
\hypersetup{colorlinks=true,linkcolor=red, anchorcolor=green, citecolor=cyan, urlcolor=red, filecolor=magenta, pdftoolbar=true}

\newtheorem{theorem}{Theorem}[section]
\newtheorem{lemma}[theorem]{Lemma}

\theoremstyle{definition}

\newtheorem{question}[theorem]{Question}
\theoremstyle{remark}
\newtheorem{remark}[theorem]{Remark}
\numberwithin{equation}{section}

\begin{document}

\setcounter{page}{1}

\title[Hyponormal and subnormal dual Toeplitz operators]{Hyponormal and subnormal dual Toeplitz operators}

\author[Linghui Kong, Yongning Li, Da Sun \MakeLowercase{and} Dechao Zheng]{Linghui Kong, Yongning Li, Da Sun \MakeLowercase{and} Dechao Zheng}

\address{School of Mathematics and Statistics, Northeast Normal University, Changchun, 130024, China.}
\email{konglh555@nenu.edu.cn}

\address{School of Mathematics and Statistics, Chongqing Technology and Business University, Chongqing, 400067, China.}
\email{yongningli@ctbu.edu.cn}

\address{School of Mathematics and Statistics, Northeast Normal University, Changchun, 130024, China.}
\email{sunda@nenu.edu.cn}

\address{Department of Mathematics, Vanderbilt University, Nashville, Tennessee, 37240, United States.}
\email{dechao.zheng@vanderbilt.edu}


\subjclass[2010]{Primary 47B35; Secondary 47B20.}

\keywords{dual Toeplitz operator, hyponormality, subnormality, Bergman space, $\varTheta$-class.}


\begin{abstract}
In this paper, we completely characterize hyponormal dual Toeplitz operators with general bounded measurable symbols by using the Douglas  theorem in \cite{Douglas1966}.
  As an application, the hyponormality of dual Toeplitz operators with radial symbols and polynomial symbols in $z$ and $\bar{z}$ is characterized concretely.
  Moreover, we obtain complete characterizations for both subnormal dual Toeplitz operators with harmonic polynomial symbols and $\varTheta$-class dual Toeplitz operators with bounded harmonic symbols.
\end{abstract} \maketitle

\section{Introduction}\label{sec1}

Let $L^2_a$ denote the Bergman space which is  the Hilbert space of analytic functions on the unit
disk $\mathbb{D}$ that are square integrable with respect to normalized area measure $dA$.
Then  $L^{2}(\mathbb{D}, dA)$ has the following decomposition:
$$L^{2}(\mathbb{D}, dA)= L^2_a\oplus (L^2_a)^{\perp}.$$
Let $P$ be the orthogonal projection from $L^{2}(\mathbb{D}, dA)$ onto the Bergman space $L^2_a$.
For $\phi\in L^{\infty}(\mathbb{D})$, under the above decomposition, the multiplication operator  $M_\phi$  defined by
$$M_\phi f= \phi f,$$
for $f\in L^{2}(\mathbb{D}, dA)$ has the following matrix representation:
\begin{eqnarray}\label{matixrep}
	M_{\phi}=\left(\begin{array}{cc }
		T_{\phi} & H_{\bar{\phi}}^*\\
		H_{\phi} & S_{\phi}
			\end{array}\right )
\end{eqnarray}
where $T_{\phi}:L^2_a\rightarrow L^2_a$ is  the Toeplitz operator with symbol $\phi$ defined by
$$T_{\phi}h =P(\phi h);$$
and $H_\phi: L^{2}_a\rightarrow (L^2_a)^{\perp}$ is the Hankel operator with symbol $\phi$ defined by
$$H_{\phi}h =(I-P)(\phi h);$$
$S_{\phi}:(L^{2}_a)^{\perp}\rightarrow (L^2_a)^{\perp}$ is the dual Toeplitz operator  with symbol $\phi$  defined by
$$S_{\phi}g=(I-P)(\phi g).$$
Zhu's book \cite{Zhu2007} contains more details on the Toeplitz operators and Hankel operators on the Bergman space $L^2_a$. The dual Toeplitz operator is introduced in \cite{SZ1999}. In \cite{SZ2002} it was shown that dual Toeplitz operators share many algebraic and spectral properties  with Toeplitz operators on the Hardy space of the unit circle. In particular, $S_{\phi}$ with its symbol $\phi\in L^2(\mathbb{D}, dA)$ is bounded if and only if $\phi$ is in $L^{\infty} (\mathbb{D}).$

Suppose  $A$ is a bounded linear operator on a Hilbert space $\mathcal H$. $A$ is said to be normal if $A^*A=AA^*$; $A$ is said to be hyponormal if $$A^*A-AA^*\geq 0 ;$$
$A$ is said to be subnormal if $A$ is the restriction to an invariant subspace of a normal operator.
In \cite{SZ2002} it was shown that $S_\phi$ is normal if and only if $\phi =\alpha +\beta \psi$ for two constants $\alpha$, $\beta$ and a real valued function $\psi\in L^{\infty}(\mathbb{D}),$ which is analogous to the Brown and Halmos theorem \cite{BrH} about normal Toeplitz operators on the Hardy space.
In this paper we will study hyponormal dual Toeplitz operators.
Using the following Douglas theorem \cite{Douglas1966}, we obtain a complete characterization on hyponormal dual Toeplitz operators.

\begin{theorem}(\cite{Douglas1966})\label{Thm00}
	Let $A$ and $B$ be bounded operators on the Hilbert space $\mathcal H$.
	Then $AA^*\leq  BB^*$  if and only if
	there exists a bounded operator $C$ on $\mathcal H$ so that $A=BC$.
\end{theorem}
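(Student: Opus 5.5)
We prove the two implications separately. The direction from factorization to the inequality is immediate. If $A=BC$ with $C$ bounded, then $AA^*=BCC^*B^*$. Since $CC^*\le \|C\|^2 I$, conjugating by $B$ preserves order, so
$$AA^*=B(CC^*)B^*\le \|C\|^2 BB^*.$$
Read literally, this only gives $AA^*\le \lambda BB^*$ with $\lambda=\|C\|^2$. The classical Douglas theorem is stated exactly in this form: $AA^*\le\lambda BB^*$ for some $\lambda\ge 0$ if and only if $A=BC$. We therefore interpret the statement with a constant $\lambda$, or with $\|C\|\le 1$ in the converse. For the application to hyponormality the constant is harmless, since one can rescale.

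For the converse, assume $AA^*\le \lambda BB^*$. Then for every $x\in\mathcal H$,
$$\|A^*x\|^2=\langle AA^*x,x\rangle\le \lambda\langle BB^*x,x\rangle=\lambda\|B^*x\|^2 .$$
We define an operator $D$ on $\operatorname{ran}B^*$ by $D(B^*x)=A^*x$.

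1. \textbf{Well-definedness.} If $B^*x=B^*y$, the inequality applied to $x-y$ gives $A^*x=A^*y$.
2. \textbf{Boundedness.} $D$ is linear, and $\|D\|\le\sqrt\lambda$ by the same inequality.
3. \textbf{Extension.} Extend $D$ by continuity to $\overline{\operatorname{ran}B^*}$, and set $D=0$ on $(\operatorname{ran}B^*)^\perp=\ker B$. This gives a bounded operator on $\mathcal H$ with $DB^*=A^*$.

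Taking adjoints yields $A=BD^*$, so $C=D^*$ works, with $\|C\|\le\sqrt\lambda$.

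The only delicate point is the well-definedness step, together with making sure the extension is consistent. Both follow directly from the norm inequality, so there is no real obstacle once the constant $\lambda$ is incorporated into the statement.
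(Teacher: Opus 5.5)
Your proof is correct. It is the standard range-factorization argument from Douglas's 1966 paper; the paper itself gives no proof and only cites that result.

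Your reformulation of the statement is warranted. As literally stated, the ``if'' direction fails: take $B\neq 0$, $C=2I$ and $A=2B$, so that $AA^*=4BB^*\not\le BB^*$.

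What the paper actually uses in the proof of Theorem~\ref{thm1} is the contractive form: $AA^*\le BB^*$ implies $A=BC$ with $\|C\|\le 1$. This bound is what yields $\|u\|_\infty\le 1$ there. Your construction gives exactly this with $\lambda=1$, since $\|C\|=\|D\|\le\sqrt{\lambda}$. So the application needs this norm estimate, not any rescaling, and your remark about rescaling is unnecessary.

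Also note that in the application $A=H_{\bar\phi}$ and $B=H_\phi$ map $L^2_a$ into $(L^2_a)^\perp$, so they are not operators on a single space. Your argument works verbatim for $A,B:\mathcal H_1\to\mathcal H_2$: define $D$ on $\overline{\operatorname{ran}B^*}\subseteq\mathcal H_1$, set it to zero on $\ker B\subseteq\mathcal H_1$, and take $C=D^*$ acting on $\mathcal H_1$.
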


Let $H^{\infty}(\mathbb{D})$ be the algebra of bounded analytic functions on the unit disk and $H^{\infty}$ the algebra of functions on the unit circle $\partial \mathbb{D}$ whose harmonic extensions are bounded analytic on the unit disk.
Our main result is the following theorem.

\begin{theorem}\label{thm1}
	Suppose $\phi$ is a function in $L^{\infty}(\mathbb{D})$.
	Then $S_{\phi}$ is hyponormal if and only if one of the following two conditions is satisfied:
	\begin{enumerate}[(i)]
		\item  there exist  a function $f\in  H^\infty (\mathbb{D})$ and a function $u\in H^\infty (\mathbb{D})$ with      $\|u\|_{\infty}\leq 1$ and $|u(0)|<1$ such that  $$\phi (z)=\frac{\overline{u(z)}f(z)+\overline{f(z)}}{1-|u (z)|^2}$$
		for $z\in \mathbb{D}$;
		\item  there is a unimodular constant $c$ such that  $\overline{\phi(z)}-c\phi(z) $ is a constant for $z\in \mathbb{D}$.
	\end{enumerate}
\end{theorem}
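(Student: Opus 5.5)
We first compute the self-commutator of $S_\phi$ from the matrix representation \eqref{matixrep}. Since $M_\phi^*M_\phi=M_{\bar\phi}M_\phi=M_{|\phi|^2}=M_\phi M_\phi^*$, we compare the $(2,2)$ entries of both products. This gives
$$H_{\bar\phi}H_{\bar\phi}^*+S_\phi^*S_\phi=H_\phi H_\phi^*+S_\phi S_\phi^*,$$
so
$$S_\phi^*S_\phi-S_\phi S_\phi^*=H_\phi H_\phi^*-H_{\bar\phi}H_{\bar\phi}^*.$$
Hence $S_\phi$ is hyponormal if and only if $H_{\bar\phi}H_{\bar\phi}^*\le H_\phi H_\phi^*$. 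These are operators on $(L^2_a)^\perp$ built from Hankel operators $L^2_a\to(L^2_a)^\perp$. We apply Theorem \ref{Thm00} with $A=H_{\bar\phi}$ and $B=H_\phi$, viewed as operators on $L^2(\mathbb{D},dA)$ by composing with $P$. Hyponormality is then equivalent to the existence of a bounded $C$ on $L^2_a$ with
$$H_{\bar\phi}=H_\phi C.$$
Replacing $C$ by a suitable contraction, we may take $\|C\|\le 1$.

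The main step is to identify $C$. Hankel operators satisfy the intertwining relation $H_\psi T_z=S_zH_\psi$, that is, $H_\psi(zh)=(I-P)(z\,H_\psi h)$. We would use this to show that $C$ can be chosen to commute with $T_z$. One route is to take $C$ to be the minimal solution, i.e.\ the one vanishing on $\ker H_\phi$ with range in $\overline{\operatorname{ran}H_\phi^*}$. A cleaner route is to build $C^*$ from $H_\phi^*g\mapsto H_{\bar\phi}^*g$ and check that it intertwines appropriately. Granting this, the commutant of $T_z$ on $L^2_a$ gives $C=T_u$ with $u\in H^\infty(\mathbb{D})$, and then $\|u\|_\infty=\|C\|\le1$. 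The relation becomes
$$(I-P)\bigl((\bar\phi-u\phi)h\bigr)=0 \quad\text{for all } h\in L^2_a .$$
Taking $h=1$ and using the module property, this says $\bar\phi-u\phi=\bar f_0$-type.

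More precisely, $H_{\bar\phi-u\phi}=0$ forces $\bar\phi-u\phi=g\in H^\infty(\mathbb{D})$, since a symbol with zero Hankel operator on the Bergman space is analytic. Solving
$$\bar\phi-u\phi=g,\qquad \phi-\bar u\bar\phi=\bar g$$
for $\phi$ gives
$$\phi(1-|u|^2)=\bar u g+\bar g .$$
When $|u|<1$ on $\mathbb{D}$, i.e.\ $u$ is not a unimodular constant (equivalently $|u(0)|<1$ by the maximum principle), this is (i) with $f=g$. When $u\equiv c$ is unimodular, $\bar\phi-c\phi$ is analytic and $\phi-\bar c\bar\phi$ is its conjugate up to the factor $-\bar c$, so the function is both analytic and antianalytic, hence constant; this is (ii).

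For the converse, in case (ii) we have $H_{\bar\phi}=cH_\phi$, so the two products coincide and $S_\phi$ is normal. In case (i), $\phi$ is bounded by assumption. Set $g=f$; a direct computation shows $\bar\phi-u\phi=f\in H^\infty$. Then $H_{\bar\phi}=H_{u\phi}=H_\phi T_u$, because $(I-P)(u\phi h)=(I-P)(\phi uh)$. Therefore
$$H_{\bar\phi}H_{\bar\phi}^*=H_\phi T_uT_u^*H_\phi^*\le H_\phi H_\phi^*,$$
since $\|T_u\|\le1$.

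The obstacle I expect is the commutant step: showing that Douglas's $C$ can be replaced by an analytic Toeplitz operator. The Douglas factor is not a priori canonical, and $H_\phi$ may have a large kernel. I would try the minimal-norm solution $C=(H_\phi|_{\overline{\operatorname{ran}H_\phi^*}})^{-1}H_{\bar\phi}$. For the intertwining I would use that $\ker H_\phi$ is $T_z$-invariant and that $H_\phi(zh)=(I-P)(zH_\phi h)$, and verify $CT_z=T_zC$ directly on a dense set.
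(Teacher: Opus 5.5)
Your reduction to $H_{\bar\phi}H_{\bar\phi}^*\le H_\phi H_\phi^*$ is correct and matches the paper. So are the passage from $\bar\phi-u\phi\in H^\infty(\mathbb{D})$ to (i) or (ii) and the sufficiency argument. The gap is the step you flag and then only grant: that the Douglas factor $C$ is an analytic Toeplitz operator. Your proposed fixes do not close it as stated. The intertwining $H_\psi T_z=S_zH_\psi$ gives $H_\phi(CT_z-T_zC)=H_{\bar\phi}T_z-S_zH_{\bar\phi}=0$. That only says $CT_z-T_zC$ maps into $\ker H_\phi$. For the minimal solution, $CT_zh\in(\ker H_\phi)^\perp$, but $T_zCh$ need not be, because $(\ker H_\phi)^\perp=\overline{\operatorname{ran}H_\phi^*}$ is invariant under $T_z^*$, not under $T_z$. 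Likewise, your $C^*$ built from $H_\phi^*g\mapsto H_{\bar\phi}^*g$ commutes with $T_z^*$ only on $\overline{\operatorname{ran}H_\phi^*}$. If $\ker H_\phi$ really were nontrivial, you would be in the situation of Cowen's Hardy-space theorem, which needs commutant lifting rather than a direct check on a dense set.

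The missing idea is that on the Bergman space this kernel is trivial. This is Das's theorem, which the paper uses: if $\phi\in L^\infty(\mathbb{D})$ is not analytic, then $\ker H_\phi=\{0\}$. To see it, suppose $(I-P)(\phi h)=0$ with $h\neq0$. Then $\phi=P(\phi h)/h$ off the discrete zero set of $h$. An essentially bounded function that agrees with a meromorphic one cannot have poles, so $\phi\in H^\infty(\mathbb{D})$.

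Hence there are two cases. If $\phi$ is analytic, then $H_\phi=0$ forces $H_{\bar\phi}=0$, and $\phi$ is constant. Otherwise $H_\phi$ is injective, so the Douglas factor is unique. Your identity $H_\phi(CT_z-T_zC)=0$ then immediately gives $CT_z=T_zC$, so $C=T_u$ with $\|u\|_\infty=\|C\|\le1$.

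The paper uses the same injectivity slightly more directly. It sets $u=C1$ and gets $\bar\phi-u\phi\in L^2_a$. Hence $H_\phi(ug-Cg)=0$, so $Cg=ug$ for $g\in H^\infty(\mathbb{D})$. It then bounds $|u(z)|=|\langle Ck_z,k_z\rangle|\le\|C\|\le1$ via the Berezin transform.
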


The above theorem is analogous to the Nakazi and Takahashi theorem \cite{Nakazi1993} on hyponormal Toeplitz operators on the Hardy space.

\begin{theorem}(\cite{Nakazi1993})
	Let $\phi$ be a nonzero function in $L^\infty(\partial{\mathbb{D}})$. $T_{\phi}$ is hyponormal on the Hardy space $H^2$ if and only if
	$$\phi =\chi_{E}\left(k^{\frac{1}{2}}u+\frac{1}{2}g\right)+\chi_{E^c}\frac{k\bar{g}+g}{1-|k|^2},$$
	where $k$ and $g$ are functions in $H^{\infty}$ with $\|k\|_{\infty}\leq 1$, $E=\{e^{i\theta}\in \partial \mathbb{D}:|k(e^{i\theta})|=1\}$, $k\bar{g}=-g$ a.e on $E$ and $u$ is a real-valued function in $L^{\infty}(\partial{\mathbb{D}})$.
\end{theorem}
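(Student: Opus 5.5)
The plan is to reduce hyponormality to an inequality between two Hankel operators and then to solve a linear functional equation for $\phi$. On $L^2(\partial\mathbb{D})=H^2\oplus(H^2)^\perp$, write $H_\psi f=(I-P)(\psi f)$ for the Hardy space Hankel operator. The analogue of \eqref{matixrep} gives
$$T_\phi^*T_\phi-T_\phi T_\phi^*=H_{\bar\phi}^*H_{\bar\phi}-H_\phi^*H_\phi .$$
So $T_\phi$ is hyponormal if and only if $H_\phi^*H_\phi\le H_{\bar\phi}^*H_{\bar\phi}$. By Theorem \ref{Thm00} applied to $A=H_\phi^*$ and $B=H_{\bar\phi}^*$, this holds if and only if $H_\phi^*=H_{\bar\phi}^*C$ for a contraction $C$; equivalently, $\|H_\phi f\|\le\|H_{\bar\phi}f\|$ for all $f\in H^2$.

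The key step, and the one I expect to be the main obstacle, is to upgrade this to Cowen's criterion: there is $k\in H^\infty$ with $\|k\|_\infty\le1$ and $\phi-k\bar\phi\in H^\infty$. I would proceed as follows.
\begin{enumerate}[(a)]
\item Define a contraction $X$ on $\overline{\operatorname{ran}}H_{\bar\phi}$ by $X(H_{\bar\phi}f)=H_\phi f$, and extend it by $0$ on the orthogonal complement.
\item Both Hankel operators satisfy $H_\psi T_z=S_zH_\psi$, where $S_z$ is the compression of $M_z$ to $(H^2)^\perp$. Hence $X$ intertwines the restrictions of $S_z$ to the two (co-)invariant ranges.
\item By the commutant lifting theorem, $X$ lifts to an operator of norm at most one commuting with $S_z$. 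Passing to adjoints ($S_z^*$ is unitarily equivalent to the shift), such an operator is the compression of multiplication by some $\bar k$, with $k\in H^\infty$ and $\|k\|_\infty\le1$.
\item Translating back gives $H_\phi=H_{k\bar\phi}$, i.e. $g:=\phi-k\bar\phi\in H^\infty$.
\end{enumerate}
The care needed is in arranging the intertwining so that the lifted operator is exactly an analytic Toeplitz-type multiplier.

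With $\phi=k\bar\phi+g$ in hand, conjugate it to get $\bar\phi=\bar k\phi+\bar g$ and substitute back. This gives $(1-|k|^2)\phi=g+k\bar g$.
\begin{itemize}
\item On $E^c$ this yields $\phi=(k\bar g+g)/(1-|k|^2)$.
\item On $E$ the same identity forces $k\bar g=-g$. Put $w=\phi-\tfrac12 g$. Then $k\bar w=k\bar\phi-\tfrac12 k\bar g=\phi-g+\tfrac12 g=w$, so $\overline{k^{1/2}}\,w$ is real there (for a measurable choice of square root). Writing it as $u$ gives $\phi=k^{1/2}u+\tfrac12 g$ with $u$ real and bounded.
\end{itemize}
This produces the stated form, and the nonzero assumption only excludes the trivial case.

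For the converse, I would check directly that the displayed $\phi$ satisfies $\phi-k\bar\phi=g\in H^\infty$. On $E$ this follows from $k\overline{k^{1/2}}u=k^{1/2}u$ and $k\bar g=-g$; on $E^c$ it is the algebra above run in reverse. Then $H_\phi f=H_{k\bar\phi}f=H_{\bar\phi}(kf)$, and $H_{\bar\phi}T_k=\widetilde S_kH_{\bar\phi}$, where $\widetilde S_k$ is the compression of $M_k$ to $(H^2)^\perp$. That compression has norm at most $\|k\|_\infty\le1$, so $\|H_\phi f\|\le\|H_{\bar\phi}f\|$, and hyponormality follows from the commutator identity.
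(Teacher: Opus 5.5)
Your argument is correct. The paper does not prove this statement: it quotes it from \cite{Nakazi1993} as the Hardy-space counterpart of Theorem \ref{thm1}. What you have written is essentially Cowen's commutant-lifting proof of Theorem \ref{cowen} \cite{Cowen1988}, followed by the same algebra the paper uses to reach (\ref{eqnf3}).

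Your step (c) is fine once stated precisely. Let $M_1=\overline{\operatorname{ran}}\,H_{\bar\phi}$, which is $S_z$-invariant. Then $X^*:(H^2)^\perp\to M_1$ satisfies $X^*S_z^*=(P_{M_1}S_z^*|_{M_1})X^*$. Commutant lifting gives $Y$ commuting with the shift $S_z^*$, with $\|Y\|\le 1$ and $P_{M_1}Y=X^*$. Then $Y=M_{\bar k}|_{(H^2)^\perp}$ for some $k\in H^\infty$ with $\|k\|_\infty\le 1$, and $Y^*|_{M_1}=X$, which is all you use. The extension of $X$ by $0$ in (a) plays no role and need not commute with $S_z$.

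The useful comparison is with the paper's proof of Theorem \ref{thm1}, which has the same skeleton. Both write the self-commutator as a difference of Hankel products, factor through a contraction, identify that contraction as an analytic multiplier, and then solve $(1-|k|^2)\phi=g+k\bar g$. The difference is on which side the contraction sits.

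For dual Toeplitz operators the self-commutator is $H_\phi H_\phi^*-H_{\bar\phi}H_{\bar\phi}^*$. So Douglas gives $H_{\bar\phi}=H_\phi B$ with $B$ acting on the analytic side. Then $u=B1$, injectivity of $H_\phi$ \cite{Das1999} and the Berezin transform pin down $B=T_u$ with no dilation theory.

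In the Hardy case the inequality is $H_\phi^*H_\phi\le H_{\bar\phi}^*H_{\bar\phi}$. So your $X$ sits on the left and is determined only on $\overline{\operatorname{ran}}\,H_{\bar\phi}$, and its extension to a multiplier is non-unique. An existence result such as commutant lifting (or Nakazi--Takahashi's extremal-problem method) is genuinely needed there.

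The same asymmetry appears in the converse. The paper can sandwich $T_uT_u^*\le I$ between $H_\phi$ and $H_\phi^*$. You instead correctly use $H_{\bar\phi}T_k=\widetilde S_kH_{\bar\phi}$ to move $k$ to the outside, since $T_k^*H_{\bar\phi}^*H_{\bar\phi}T_k$ alone would not give the inequality.

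Finally, the $E$/$E^c$ split with the $k^{1/2}u$ term is specific to the circle. On the disk, the maximum modulus theorem reduces $|u|=1$ to $u$ being a unimodular constant, which gives the clean dichotomy (i)/(ii) of Theorem \ref{thm1}. On $\partial\mathbb{D}$, a nonconstant $k$ (e.g.\ an inner function) can be unimodular on a set of positive measure.
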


 Clearly,  Condition  (i)  or (ii) in Theorem \ref{thm1} is equivalent to the following condition: there is a function $u$ in $H^\infty (\mathbb{D})$ with $\|u\|_{\infty}\leq 1 $ such that
$$\bar{\phi}-u\phi\in H^\infty (\mathbb{D}).$$
So Theorem \ref{thm1} can be reformulated as the following form.

\begin{theorem}\label{theorem3.2}
Let $\phi$ be a function in $L^{\infty}(\mathbb{D})$.
Then $S_{\phi}$ is hyponormal if and only if
the set $$\left\{u\in H^\infty (\mathbb{D}):\|u\|_{\infty}\leq1~\text{and}~\bar{\phi}-u\phi\in H^\infty (\mathbb{D})\right\}$$ is not empty.
\end{theorem}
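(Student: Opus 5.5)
Throughout, $\phi\in L^\infty(\mathbb{D})$ is fixed. The plan is to turn hyponormality of $S_\phi$ into an operator inequality between two Hankel operators, and then to read that inequality through Theorem \ref{Thm00}.

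\textbf{Step 1: the self-commutator.} I compare the $(2,2)$ entries of $M_\phi^*M_\phi$ and $M_\phi M_\phi^*$ in the matrix representation (\ref{matixrep}). Since $M_\phi$ is normal, these entries agree, which gives
$$H_{\bar\phi}H_{\bar\phi}^*+S_\phi^*S_\phi=H_\phi H_\phi^*+S_\phi S_\phi^*,$$
and hence
$$S_\phi^*S_\phi-S_\phi S_\phi^*=H_\phi H_\phi^*-H_{\bar\phi}H_{\bar\phi}^*.$$
So $S_\phi$ is hyponormal if and only if $H_{\bar\phi}H_{\bar\phi}^*\le H_\phi H_\phi^*$.

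\textbf{Step 2: reduction via Douglas.} By Theorem \ref{Thm00}, the inequality of Step 1 holds if and only if $H_{\bar\phi}=H_\phi C$ for some bounded $C$ on $L^2_a$. Theorem \ref{Thm00} is stated on one space $\mathcal H$, but it applies here after viewing both Hankel operators as operators on $L^2(\mathbb{D},dA)$ that vanish on $(L^2_a)^\perp$. Moreover, when $AA^*\le BB^*$ the Douglas factor can be taken with $\|C\|\le 1$.

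\textbf{Step 3: sufficiency.} Suppose $\bar\phi-u\phi\in H^\infty(\mathbb{D})$ with $\|u\|_\infty\le1$. Then $H_{\bar\phi}=H_{u\phi}$. For $h\in L^2_a$ the function $uh$ is analytic, so $H_{u\phi}h=H_\phi(uh)$, that is, $H_{\bar\phi}=H_\phi T_u$. Since $\|T_u\|\le 1$,
$$H_{\bar\phi}H_{\bar\phi}^*=H_\phi T_uT_u^*H_\phi^*\le H_\phi H_\phi^*,$$
and Step 1 gives that $S_\phi$ is hyponormal.

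\textbf{Step 4: necessity.} Here I must produce $u\in H^\infty(\mathbb{D})$ with $\|u\|_\infty\le1$ and $H_{\bar\phi}=H_{u\phi}$, since this gives $(I-P)\big((\bar\phi-u\phi)h\big)=0$ for all $h\in L^2_a$. Taking $h=1$ and $h=z^n$ then shows that $\bar\phi-u\phi$ is a bounded function lying in $L^2_a$, hence in $H^\infty(\mathbb{D})$.

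The tool is the intertwining relation $H_\psi M_z=S_zH_\psi$. It holds because $(I-P)\big(zP(\psi h)\big)=0$, and it shows that $\ker H_\phi$ is $M_z$-invariant. I plan to work with the adjoint form of the inequality, $\|P(\phi g)\|\le\|P(\bar\phi g)\|$ for $g\in(L^2_a)^\perp$. Set
$$\mathcal M=\overline{\{P(\bar\phi g):g\in(L^2_a)^\perp\}}.$$
The inequality gives a contraction $X:\mathcal M\to L^2_a$ with $X P(\bar\phi g)=P(\phi g)$.

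Because $\bar z g\in(L^2_a)^\perp$ whenever $g\in (L^2_a)^\perp$, we have $T_{\bar z}P(\bar\phi g)=P(\bar\phi\,\bar z g)$. Hence $\mathcal M$ is $T_{\bar z}$-invariant, and $XT_{\bar z}=T_{\bar z}X$ on $\mathcal M$. The goal is then to show that $X^*$ is the restriction of (the compression of) $T_u$ for some $u\in H^\infty(\mathbb{D})$ with $\|u\|_\infty\le1$. Equivalently, I want to lift the contraction $X$, which intertwines $T_{\bar z}$ on a co-invariant setting, to a multiplier.

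\textbf{Main obstacle.} The lifting in Step 4 is the hard step. On the Bergman space, commutant lifting with norm control fails in general, so the structure of this particular $\mathcal M$ must be used. My first attempt is to feed in test functions of the form $g=\bar z\,\bar h$ with $h\in L^2_a$; these lie in $(L^2_a)^\perp$ automatically. Then $P(\bar\phi g)$ and $P(\phi g)$ become Toeplitz-type expressions in which $\bar h$ appears explicitly. The aim is to show that the map $X$ extends to all of $L^2_a$ as multiplication by a bounded analytic $u$.

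If that fails, the fallback is to pass to $L^2(\mathbb{D},dA)$, where $M_z$ is normal and the Fuglede–Putnam theorem is available. Concretely, I would extend $X$ to an operator commuting with $M_{\bar z}$ and $M_z$. Such an operator is a multiplication $M_u$ with $\|u\|_\infty\le1$, and the $T_{\bar z}$-invariance of $\mathcal M$ should force $u$ to be analytic.
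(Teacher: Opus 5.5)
Your Steps 1--3 are correct and agree with the paper. The identity $S_\phi^*S_\phi-S_\phi S_\phi^*=H_\phi H_\phi^*-H_{\bar\phi}H_{\bar\phi}^*$, the appeal to Douglas, and the sufficiency argument via $H_{\bar\phi}=H_\phi T_u$ are all fine. The gap is Step 4: necessity is never proved. You reduce it to lifting the contraction $X\colon\mathcal M\to L^2_a$ to a multiplier, correctly note that norm-controlled commutant lifting is unavailable on the Bergman space, and then offer two strategies without carrying either out. The Fuglede--Putnam fallback presupposes an extension of $X$ to an operator on $L^2(\mathbb{D},dA)$ commuting with $M_z$ and $M_{\bar z}$. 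That extension is exactly the lifting whose existence is at issue, so as written nothing produces $u$.

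The missing idea is that no lifting is needed, because $H_\phi$ is injective whenever $\phi$ is not analytic (the Das theorem used in the paper). If $0\neq h\in L^2_a$ and $(I-P)(\phi h)=0$, then $\phi=P(\phi h)/h$ a.e.\ off the discrete zero set of $h$, and boundedness of $\phi$ forces this meromorphic quotient to be bounded analytic. (If $\phi$ is analytic, then $H_\phi=0$, the inequality forces $H_{\bar\phi}=0$, so $\phi$ is constant and $u=0$ works.)

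Hence, in your notation, $\mathcal M=(\ker H_\phi)^\perp=L^2_a$. So $X$ is a contraction on all of $L^2_a$ with $XT_{\bar z}=T_{\bar z}X$, i.e.\ $X^*$ commutes with the Bergman shift $T_z$. Then $X^*p=pu$ for every polynomial $p$, where $u=X^*1$. Since $k_z$ is a uniform limit of polynomials on $\overline{\mathbb{D}}$, also $X^*k_z=uk_z$, so $|u(z)|=|\langle X^*k_z,k_z\rangle|\le 1$ and $X^*=T_u$. Finally $H_{\bar\phi}=H_\phi X^*=H_\phi T_u=H_{u\phi}$, and applying this to $1$ gives $\bar\phi-u\phi\in L^2_a\cap L^\infty(\mathbb{D})=H^\infty(\mathbb{D})$.

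This is essentially the paper's route. It takes the Douglas factor $B$ with $H_{\bar\phi}=H_\phi B$, sets $u=B1$, and notes $H_\phi(ug-Bg)=0$ for $g\in H^\infty(\mathbb{D})$. It then uses injectivity of $H_\phi$ and the Berezin transform to get $B=T_u$ with $\|u\|_\infty\le 1$.
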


 The above theorem  bears a resemblance to Cowen's theorem on hyponormal Toeplitz operators on the Hardy space $H^2$   (\cite{Cowen1988}, \cite{Nakazi1993}).
\begin{theorem}(\cite{Cowen1988}, \cite{Nakazi1993})\label{cowen}
	Let $\phi$ be in $L^\infty(\partial{\mathbb{D}})$.
	Then the Toeplitz operator $T_{\phi}$ on the Hardy space $H^2$ is hyponormal if and only if
	the set $$\left\{u\in H^\infty  :\|u\|_{\infty}\leq1~\text{and}~\phi-u\bar{\phi}\in H^\infty  \right\}$$ is not empty.
\end{theorem}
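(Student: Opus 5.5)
The plan is to reduce hyponormality of $T_\phi$ to an operator inequality between two Hankel operators. The Douglas theorem (Theorem \ref{Thm00}) then factors one Hankel operator through the other, and commutant lifting identifies the factor as a compressed multiplication operator. Throughout, $P$ denotes the Riesz projection of $L^2(\partial\mathbb{D})$ onto $H^2$, and for $\psi\in L^\infty(\partial\mathbb{D})$ we put $H_\psi h=(I-P)(\psi h)$ as a map $H^2\to (H^2)^\perp$, exactly as in the Bergman setting of the introduction.

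\textbf{Step 1: reduction to a Hankel inequality.} From the block decomposition of $M_\phi$ on $H^2\oplus (H^2)^\perp$ and $M_{\bar\phi}M_\phi=M_{\phi}M_{\bar\phi}$, one gets $T_{\bar\phi}T_\phi=T_{|\phi|^2}-H_\phi^*H_\phi$. Applying the same identity with $\bar\phi$ in place of $\phi$ gives
$$T_\phi^*T_\phi-T_\phi T_\phi^*=H_{\bar\phi}^*H_{\bar\phi}-H_\phi^*H_\phi .$$
Hence $T_\phi$ is hyponormal if and only if $H_\phi^*H_\phi\le H_{\bar\phi}^*H_{\bar\phi}$.

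\textbf{Step 2: the easy direction.} Suppose $u\in H^\infty$, $\|u\|_\infty\le1$ and $\phi-u\bar\phi\in H^\infty$. Then $H_\phi=H_{u\bar\phi}$. Since $uP(\bar\phi h)\in H^2$, we have
$$H_{u\bar\phi}h=(I-P)\big(u(I-P)(\bar\phi h)\big).$$
Thus $H_\phi=\big((I-P)M_u|_{(H^2)^\perp}\big)H_{\bar\phi}$, where the first factor is a contraction. This yields $\|H_\phi h\|\le\|H_{\bar\phi}h\|$ for all $h$, which is the inequality of Step 1.

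\textbf{Step 3: the converse.} Assume $\|H_\phi h\|\le\|H_{\bar\phi}h\|$ for all $h$. By Theorem \ref{Thm00}, or directly, the map $H_{\bar\phi}h\mapsto H_\phi h$ is a well-defined contraction $X$ on $\mathcal{K}:=\overline{\operatorname{ran}H_{\bar\phi}}$, and we extend it by $0$ on $\mathcal{K}^\perp$. Write $V=(I-P)M_z|_{(H^2)^\perp}$. The Hankel intertwining $H_\psi T_z=VH_\psi$ shows two things:
\begin{enumerate}[(a)]
\item $\mathcal{K}$ is invariant under $V$;
\item $XV|_{\mathcal{K}}=VX|_{\mathcal{K}}$, since $X$ maps into $\overline{\operatorname{ran}H_\phi}$ and $H_\phi T_z=VH_\phi$.
\end{enumerate}
Now $V^*$ is multiplication by $\bar z$ on $(H^2)^\perp=\bar z\,\overline{H^2}$, which is unitarily equivalent to the unilateral shift. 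So $V$ is a backward shift, and $\mathcal{K}$ is co-invariant for that shift. The compression of $X$ to $\mathcal{K}$ commutes with $V|_{\mathcal{K}}$. The commutant lifting theorem (Sarason's form) gives $u\in H^\infty$ with $\|u\|_\infty=\|P_{\mathcal{K}}X|_{\mathcal{K}}\|\le1$ and
$$P_{\mathcal{K}}X|_{\mathcal{K}}=P_{\mathcal{K}}(I-P)M_u|_{\mathcal{K}}.$$
Here $(I-P)M_u|_{(H^2)^\perp}$ is the adjoint of the analytic Toeplitz operator $M_{\bar u}$ of the shift $V^*$.

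\textbf{Step 4: conclusion and main obstacle.} The main difficulty is in Step 3: $X$ maps $\mathcal{K}$ into $\overline{\operatorname{ran}H_\phi}$, not into $\mathcal{K}$. To apply commutant lifting cleanly, I would instead work on the joint space $\mathcal{L}=\overline{\operatorname{ran}H_{\bar\phi}+\operatorname{ran}H_\phi}$, which is also $V$-invariant. I would regard $X$ as a contraction from $\mathcal{K}$ into $\mathcal{L}$ intertwining the restrictions of $V$, and use the intertwining version of commutant lifting, whose proof is the same as the commutant version. This produces $u\in H^\infty$, $\|u\|_\infty\le1$, with $X=(I-P)M_u$ on $\mathcal{K}$. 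Then
$$H_\phi h=(I-P)\big(u(I-P)(\bar\phi h)\big)=H_{u\bar\phi}h,$$
by the same computation as in Step 2. Hence $H_{\phi-u\bar\phi}=0$. By the Hardy-space kernel fact ($H_\psi=0$ iff $\psi\in H^\infty$), this gives $\phi-u\bar\phi\in H^\infty$.
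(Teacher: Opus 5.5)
The paper gives no proof of this statement; it quotes it from Cowen and Nakazi--Takahashi as the Hardy-space counterpart of Theorem~\ref{theorem3.2}. Judged on its own, your argument, with the Step 4 repair, is correct. It is essentially the route of Cowen's original proof: reduce to $\|H_\phi h\|\le\|H_{\bar\phi}h\|$, form the contraction $H_{\bar\phi}h\mapsto H_\phi h$, and realize it as a co-analytic multiplier by commutant lifting.

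It is instructive to compare this with the paper's proof of the dual analogue, Theorem~\ref{thm1}, which needs no lifting. There the self-commutator is $H_\phi H_\phi^*-H_{\bar\phi}H_{\bar\phi}^*$, so Douglas's theorem produces a \emph{right} factor $H_{\bar\phi}=H_\phi B$, with $B$ acting on $L^2_a$. One can then evaluate $u=B1$. Injectivity of Bergman Hankel operators with non-analytic symbols (Das), together with the Berezin transform, forces $B=T_u$ with $\|u\|_\infty\le1$, and even makes $u$ unique.

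In your setting the self-commutator is $H_{\bar\phi}^*H_{\bar\phi}-H_\phi^*H_\phi$, so the factor sits on the \emph{left}, on $(H^2)^\perp$, where there is no unit to evaluate at. It is also determined only on $\mathcal{K}$, which is a proper subspace whenever $\ker H_{\bar\phi}\neq\{0\}$ (for $\phi=z$, $\mathcal{K}=\mathbb{C}\bar z$). Extending it off $\mathcal{K}$ with norm at most one and in multiplier form is exactly what commutant lifting provides. Accordingly $u$ is usually far from unique: for $\phi=z$, any $u$ in the closed unit ball of $H^\infty$ with $u(0)=0$ works. So the paper's method is elementary but rests on Bergman-specific injectivity, while the heavier tool in your argument is genuinely needed on $H^2$.

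One correction and one simplification for Steps 3--4. As written, the claim that $P_{\mathcal K}X|_{\mathcal K}$ commutes with $V|_{\mathcal K}$ does not follow from (b), because $P_{\mathcal K}$ does not commute with $V$; you were right to flag this. However, the obstacle disappears, and with it the detour through $\mathcal{L}$:
\begin{enumerate}[(1)]
\item Your inequality gives $\ker H_{\bar\phi}\subseteq\ker H_\phi$.
\item Hankel matrices are symmetric. If $Wz^n=\bar z^{n+1}$ and $C$ conjugates Taylor coefficients on $H^2$, then $(W^*H_\psi)^*=C\,W^*H_\psi\,C$, so $\overline{\operatorname{ran}}\,H_\psi=W\big((C\ker H_\psi)^\perp\big)$.
\item Hence $\overline{\operatorname{ran}}\,H_\phi\subseteq\mathcal{K}$, so $X$ maps $\mathcal{K}$ into itself and $X|_{\mathcal K}$ commutes with $V|_{\mathcal K}$ by (b). Sarason's commutant theorem then applies directly.
\item To replace $P_{\mathcal K}(I-P)M_u|_{\mathcal K}$ by $(I-P)M_u|_{\mathcal K}$, use $(I-P)M_uH_{\bar\phi}=H_{\bar\phi}T_u$, which shows that $\mathcal{K}$ is invariant under $(I-P)M_u$.
\end{enumerate}
If you keep the intertwining version instead, two points should be made explicit. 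First, the minimal isometric dilation of $P_{\mathcal K}V^*|_{\mathcal K}$ for nonzero $\mathcal{K}$ is $V^*$ itself, because $\bigvee_{n\ge0}V^{*n}\mathcal{K}$ reduces the multiplicity-one shift $V^*$. Second, the lifting $B=M_{\bar u}|_{(H^2)^\perp}$ satisfies $P_{\mathcal K}B=X^*P_{\mathcal L}$, which gives $X=B^*|_{\mathcal K}=(I-P)M_u|_{\mathcal K}$ at once.
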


On the Hardy space, by using Schur's function, Zhu \cite{Zhu1995} characterized hyponormal Toeplitz operators with trigonometric symbols. Subsequently, Farenick and Lee \cite{Far1996} derived a new criterion in terms of the Fourier coefficients of the trigonometric polynomial.
Recently, Toeplitz matrices were used to characterize hyponormal Toeplitz operators with trigonometric symbols in \cite{DZZ}.
If the dual Toeplitz operator $S_\phi$ is hypnomoral but not normal, Condition (i) in Theorem \ref{thm1} must hold and hence implies that $\phi$ is a real analytic function on the unit disk , i.e., $\phi(z)$ has a power series expansion in $z^n\bar{z}^m$:
$$\phi (z)=\sum_{n=0}^{\infty}\sum_{m=0}^\infty a_{n, m}z^n\bar{z}^m,$$
for $z\in \mathbb D$ as
\begin{equation}\label{realAna}
	\phi (z) =\big[\overline{u(z)} f(z)+\overline{f(z)}\big]\left(\sum_{n=0}^{\infty}|u(z)|^{2n}\right), 	
\end{equation}
and both $f$ and $u$ are analytic on the unit disk and $|u(z)|<1$ for each $z$ in the unit disk.
Thus $\phi (z)$ is a function in $C^{\infty} (\mathbb{D}).$ This indicates that many hyponormal dual Toeplitz operators $S_\phi$ with special symbols must be normal if their symbols are too``bad". The following theorem confirms such phenomena.

\begin{theorem}\label{thm5}
	Let $\phi$ be a bounded and radial function on $\mathbb{D}$. If $S_{\phi}$ is hyponormal, then  $S_{\phi}$ is normal.
\end{theorem}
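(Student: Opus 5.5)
We will use Theorem \ref{theorem3.2}. Suppose $\phi$ is bounded and radial and $S_\phi$ is hyponormal. Then there is $u\in H^\infty(\mathbb{D})$ with $\|u\|_\infty\le 1$ such that
$$g:=\bar\phi-u\phi\in H^\infty(\mathbb{D}).$$
The goal is to show that $\phi=\alpha+\beta\psi$ with $\psi$ real valued. By the normality criterion of \cite{SZ2002} quoted in the introduction, this gives normality of $S_\phi$. Equivalently, we must reach Condition (ii) of Theorem \ref{thm1}: $\bar\phi-c\phi$ is constant for some unimodular $c$.

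The key step exploits radiality through Fourier analysis on circles. Write $u(z)=\sum_{k\ge0}u_kz^k$ and $g(z)=\sum_{k\ge0}g_kz^k$. Fix $r\in(0,1)$. On the circle $|z|=r$ the function $\phi$ is the constant $\phi(r)$ (defined for a.e. $r$), so the identity reads
$$\overline{\phi(r)}-\phi(r)\sum_k u_k r^k e^{ik\theta}=\sum_k g_k r^k e^{ik\theta}$$
for a.e. $\theta$. Comparing Fourier coefficients gives two relations:
$$\overline{\phi(r)}-u_0\phi(r)=g_0, \qquad -\phi(r)u_kr^k=g_kr^k \quad (k\ge1).$$
Hence, for each $k\ge1$, $\phi(r)u_k=-g_k$ for a.e. $r$.

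We then split into cases.
\begin{enumerate}[(a)]
\item If some $u_k\ne0$ with $k\ge1$, then $\phi$ is a.e. equal to the constant $-g_k/u_k$, and $S_\phi$ is a scalar, hence normal.
\item Otherwise $u\equiv u_0$ is a constant with $|u_0|\le1$, and $\bar\phi-u_0\phi=g_0$ is constant.
\end{enumerate}
In case (b), if $|u_0|=1$, this is exactly Condition (ii), and $\phi$ has the required form. Concretely, writing $u_0=\bar\lambda^2$ with $|\lambda|=1$, the function $\lambda\phi$ has imaginary part equal to a constant. So $\phi=\bar\lambda(\text{const}+\text{real})$, and $S_\phi$ is normal.

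If instead $|u_0|<1$, solve the linear system formed by $\bar\phi-u_0\phi=g_0$ and its conjugate $\phi-\bar u_0\bar\phi=\bar g_0$. This gives
$$\phi=\frac{\bar u_0g_0+\bar g_0}{1-|u_0|^2},$$
a constant, so $S_\phi$ is again normal.

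The main point needing care is the coefficient comparison. It requires that, for a.e. $r$, the identity holds for a.e. $\theta$ on the circle of radius $r$, which follows from Fubini in polar coordinates. It also requires that the Taylor series of $u$ and $g$ give the Fourier series on each circle $|z|=r<1$; this is immediate because both functions are analytic in a neighborhood of the closed disk of radius $r$. Beyond this bookkeeping the argument is elementary, and the substance is carried entirely by Theorem \ref{theorem3.2}.
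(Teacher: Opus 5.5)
Your proof is correct and follows essentially the paper's route: Theorem~\ref{theorem3.2} combined with averaging $\bar\phi-u\phi$ over the circles $|z|=r$. The paper uses only the zeroth Fourier coefficient (the mean value property) to get that $\bar\phi-u(0)\phi$ is constant, so your case (a) is harmless but not needed; also, a small slip: with $u_0=\bar\lambda^2$ it is $\bar\lambda\phi$ (not $\lambda\phi$) whose imaginary part is constant, giving $\phi=\lambda(\text{const}+\text{real})$, which does not affect the conclusion.
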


When we study hyponormality of operators,   the dual Toeplitz operators are more friendly than Toeplitz operators on the Bergman space $L^2_a$.
On the Bergman space $L_a^2$, the hyponormality of Toeplitz operators seems more complicated, only partial sufficient or necessary conditions were found except for polynomial symbols of specific forms. Sadraoui \cite{Sadraoui1992} established a connection between the hyponormality of Toeplitz operators and the boundary behavior of their symbols: Let $f,g$ be bounded analytic and
$f'\in H^2$. If $T_{f+\bar{g}}$ is hyponormal, then $g'\in H^2$ and $|g'|\leq|f'|$ almost everywhere on the boundary $\partial\mathbb{D}$.
Additionally, a characterization of hyponormal Toeplitz operators of the form $T_{z^n+\alpha\bar{z}^m}$ was given there.
Adopting Berezin transform as a central tool, Ahern and \v{C}u\v{c}kovi\'{c} \cite{Ahern1996} showed that if $T_{f+\bar{g}}$ is hyponormal, then $\mathcal{B}u\geq u$, where $\mathcal{B}$ denotes the Berezin transform and $u=|f|^2-|g|^2$. They also applied this result to generalize Sadraoui's work.

Theorem \ref{thm1} also immediately gives the characterization of hyponormal dual Toeplitz operators with bounded harmonic symbol obtained first in \cite{Ding2020}.
Let $BMOA$ denote the space of analytic functions in the Hardy space $H^2$ whose boundary values have bounded mean oscillation. Then it was pointed out in \cite{AxZ} that  $$P(h^{\infty}(\mathbb{D}))=BMOA,$$
  where  $h^{\infty}(\mathbb{D})$ denotes the space of bounded harmonic functions on the unit disk.

\begin{theorem}\label{Cor00}
	Suppose  $\phi $ is a bounded and  harmonic function on the unit disk.  Then
	$S_{\phi}$ is hyponormal if and only if either there are a unimodular constant $\xi$, a constant $\beta$ and a function $g$ in $BMOA$ with $\Re g$ in $L^{\infty}(\mathbb{D})$ such that
	$$\phi =\xi (g+\bar{g})+\beta$$
	 or there are constants $\alpha$ and $\beta$ with $|\alpha|<1$ and a function $g$ in $H^{\infty}(\mathbb{D})$ such that
	$$\phi=\alpha g+\bar{g}+\beta.$$
\end{theorem}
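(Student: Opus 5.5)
We derive Theorem \ref{Cor00} from Theorem \ref{theorem3.2}. Write the bounded harmonic symbol as $\phi=f+\bar h$ with $f,h$ analytic on $\mathbb D$ and $h(0)=0$. Then $f=P\phi$ and $\bar h=\phi-f$, and by the fact $P(h^\infty(\mathbb D))=BMOA$ quoted above, $f,h\in BMOA$, though neither need be bounded. By Theorem \ref{theorem3.2}, $S_\phi$ is hyponormal if and only if there is $u\in H^\infty(\mathbb D)$ with $\|u\|_\infty\le1$ such that
$$\bar\phi-u\phi=\bar f+h-uf-u\bar h\in H^\infty(\mathbb D).$$
The argument then has three steps: show that such a $u$ must be constant, split into the cases $|u|<1$ and $|u|=1$, and check the converse.

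\textbf{Step 1: $u$ is constant.} The difference $F:=\bar\phi-u\phi$ is analytic, so $\bar f-u\bar h=F-h+uf$ is analytic. Apply $\bar\partial=\partial/\partial\bar z$, which kills analytic functions:
$$\overline{f'}-u\,\overline{h'}=0 .$$
\begin{itemize}
\item If $h'\not\equiv0$, then $u=\overline{f'}/\overline{h'}$ is simultaneously analytic and antianalytic off the zeros of $h'$. Hence $u$ is a constant $c$, and then $f'=\bar c h'$, so $f=\bar c h+\text{const}$.
\item If $h'\equiv0$, then $h=0$ and $f'\equiv0$, so $\phi$ is constant. The conclusion of the theorem holds trivially in that case, for instance with $g=0$ in the second alternative.
\end{itemize}
So in the nontrivial case, $\phi=\bar c h+\bar h+\beta$ with $|c|\le1$ and $\beta$ a constant.

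\textbf{Step 2: the two cases.}
\begin{itemize}
\item Case $|c|<1$. Membership $F=(1-|c|^2)h-c\beta+\bar\beta\in H^\infty(\mathbb D)$ forces $h\in H^\infty(\mathbb D)$. Setting $g=h$ and $\alpha=\bar c$ gives $\phi=\alpha g+\bar g+\beta$ with $|\alpha|<1$ and $g\in H^\infty(\mathbb D)$.
\item Case $|c|=1$. Choose a square root $\xi$ with $\xi^2=\bar c$, and set $g=\xi h$. Then $\bar c h+\bar h=\xi^2 h+\bar h=\xi(\xi h+\bar\xi\bar h)=\xi(g+\bar g)$. Thus $\phi=\xi(g+\bar g)+\beta$ with $g\in BMOA$. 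Moreover $2\Re g=\bar\xi(\phi-\beta)$ is bounded, so $\Re g\in L^\infty(\mathbb D)$.
\end{itemize}

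\textbf{Step 3: converse.} We exhibit an admissible $u$ and apply Theorem \ref{theorem3.2}.
\begin{itemize}
\item For $\phi=\xi(g+\bar g)+\beta$, take $u=\bar\xi^2$. Then $\bar\phi-u\phi=\bar\beta-\bar\xi^2\beta$, a constant, which lies in $H^\infty(\mathbb D)$.
\item For $\phi=\alpha g+\bar g+\beta$, take $u=\bar\alpha$. Then $\bar\phi-u\phi=(1-|\alpha|^2)g+\bar\beta-\bar\alpha\beta\in H^\infty(\mathbb D)$.
\end{itemize}
In both cases $\phi$ is bounded and harmonic by assumption, so Theorem \ref{theorem3.2} applies and $S_\phi$ is hyponormal.

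\textbf{Main obstacle.} The delicate point is Step 1: differentiating relations involving the possibly unbounded BMOA functions $f$ and $h$, and deducing that $u$ is constant. This is legitimate because everything is smooth in the open disk and the identities hold pointwise there. The boundedness requirements ($h\in H^\infty(\mathbb D)$ in the first case, $\Re g\in L^\infty(\mathbb D)$ in the second) then come from the hypothesis $F\in H^\infty(\mathbb D)$ and the boundedness of $\phi$, respectively.
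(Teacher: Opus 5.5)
Your proof is correct and takes essentially the same route as the paper. Both arguments reduce to Theorem \ref{theorem3.2}, apply $\partial/\partial\bar z$ to obtain $\overline{f'}=u\,\overline{h'}$, conclude that $u$ is a constant of modulus at most $1$, and rotate by a square root in the unimodular case. One point in your favour: when $|c|<1$ you explicitly note that $(1-|c|^2)h+\text{const}\in H^\infty(\mathbb D)$ forces $h\in H^\infty(\mathbb D)$, a step the paper leaves implicit, since it only records $g\in BMOA$ there.
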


In the proof of Theorem \ref{Cor00}, Ding, Wu and Zhao \cite{Ding2020} used the commutant characterization of the Bergman backward shift in \cite{Bourdon2000}.  We will give a different proof by using Theorem \ref{thm1}. The result in Theorem \ref{Cor00} was later generalized to block dual Toeplitz operators with matrix-valued bounded harmonic symbols in \cite{Fu2025}. Besides, on the harmonic Bergman space, Wang and Zhao \cite{Wang2023} discovered the equivalence between the hyponormality and normality of dual Toeplitz operators.

   For general polynomial symbols $\phi$ of $z$ and $\bar{z}$, it is unknown when $T_{\phi}$ is hyponormal on the Bergman space. In 2005, Hwang \cite{Hwang2005} studied the Toeplitz operator $T_{a_mz^m+a_nz^n+b_m\bar{z}^m+b_n\bar{z}^n}$ on weighted Bergman spaces, where $0<m<n$ and $a_m\bar{a}_n=\bar{b}_mb_n$. Hwang obtained a sufficient and necessary condition involving the coefficients $a_m, a_n, b_m, b_n$ for the hyponormality. In 2009, Lu and Shi \cite{Shi2009} revisited this type of operators, proving that the necessary condition given in \cite{Sadraoui1992} becomes sufficient under the constraint $|a_n|\leq |b_n|$. In the same year, Lu and Liu \cite{Liu2009} investigated hyponormal Toeplitz operators with quasi-homogeneous symbols.
     Fleeman and Liaw \cite{FL2019} showed that
    if  $\phi(z)=a_1z^{m_1}\bar{z}^{n_1}+\cdots+a_kz^{m_k}\bar{z}^{n_k}$ with $m_1-n_1=\cdots=m_k-n_k=\delta>0$, and $a_i$ all
    		lie in the same quarter plane for $1\leq i\leq k$, then $T_{\phi}$ is hyponormal.
  Using Theorem \ref{thm1} we obtain a characterization of hyponormal dual Toeplitz operators with polynomial symbols of $z$ and $\bar{z}$ in terms of their symbols in the following theorem.

    \begin{theorem}\label{poly}   Suppose $\phi$ is a polynomial of $z$ and $\bar{z}$:
   	$$\phi (z)=\sum_{i=0}^{n}\sum_{j=0}^n a_{ij}z^i{\bar{z}}^j.$$
   	The dual Toeplitz operator $S_{\phi}$ is hyponormal if and only if one of the following two conditions holds:
   	
   	\begin{enumerate}[(i)]
   		\item  the dual Toeplitz operator $S_{\phi}$ is normal and  there is a unimodular constant $c$ such that
   		$$\overline{ a_{ji}}=ca_{ij}$$
   		for     $(i, j)\in \{0, \cdots, n\}\times \{0, \cdots, n\}$ with $(i, j)\neq (0, 0)$;
   		\item  there exists a function $u\in H^{\infty}(\mathbb{D})$ with $\|u\|_{\infty}\leq 1$ and $|u(0)|<1$ such that  for $z\in \mathbb{D}, $
   		\begin{eqnarray} \label{uexp}
   			u(z)\left({\sum_{j=0}^n{a_{jm}}z^j}\right)&=& {\sum_{j=0}^n\overline{a_{mj}}z^j}
   		\end{eqnarray}
   		for $m=1, \cdots , n.$
   	\end{enumerate}
   \end{theorem}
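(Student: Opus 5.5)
We will derive Theorem \ref{poly} from Theorem \ref{theorem3.2}, which says that $S_\phi$ is hyponormal if and only if there is $u\in H^\infty(\mathbb{D})$ with $\|u\|_\infty\le 1$ and $\bar\phi-u\phi\in H^\infty(\mathbb{D})$. We also use the two-case form of Theorem \ref{thm1}. The plan is to rewrite the condition $\bar\phi-u\phi\in H^\infty(\mathbb{D})$ as a family of identities among analytic functions, grouping terms by powers of $\bar z$.

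First, write
$$\phi(z)=\sum_{m=0}^n \bar z^m p_m(z),\qquad p_m(z)=\sum_{j=0}^n a_{jm}z^j,$$
and
$$\bar\phi(z)=\sum_{m=0}^n \bar z^m q_m(z),\qquad q_m(z)=\sum_{j=0}^n \overline{a_{mj}}z^j.$$
Then
$$\bar\phi-u\phi=\sum_{m=0}^n \bar z^m\bigl(q_m-u p_m\bigr).$$
The key lemma is that a finite sum $\sum_{m=0}^n \bar z^m h_m$, with each $h_m$ analytic in $\mathbb{D}$ (here bounded, since $u$ is bounded), is analytic only if $h_m\equiv0$ for $m\ge1$. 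To prove it, apply $\partial/\partial\bar z$ repeatedly. After $n$ applications the sum reduces to $n!\,h_n$, so analyticity forces $h_n=0$, and downward induction gives $h_m=0$ for all $m\ge1$. Conversely, if $h_m=0$ for $m\ge 1$, the sum equals $h_0$, which is bounded analytic. Hence
$$\bar\phi-u\phi\in H^\infty(\mathbb{D})\iff u\,p_m=q_m\ \text{ for } m=1,\dots,n,$$
which is exactly the system \eqref{uexp}.

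It remains to account for the normalization $|u(0)|<1$ and for condition (i). Suppose $S_\phi$ is hyponormal, and take $u$ from Theorem \ref{theorem3.2}. If $|u(0)|<1$, condition (ii) holds. Otherwise $|u(0)|=1$, and by the maximum principle $u\equiv c$ is a unimodular constant. Then $\bar\phi-c\phi$ is a bounded analytic function. It is also a polynomial in $z,\bar z$ whose $\bar z$-free part has the form $\sum_j(\overline{a_{0j}}-c\,a_{j0})z^j$. Conjugating and using $|c|=1$ shows $\bar\phi-c\phi=-c\,\overline{(\bar\phi-c\phi)}$, so it is both analytic and antianalytic, hence constant. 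Comparing coefficients of $z^i\bar z^j$ for $(i,j)\neq(0,0)$ gives $\overline{a_{ji}}=c\,a_{ij}$, and $S_\phi$ is normal by the result of \cite{SZ2002} quoted in the introduction (or simply via Theorem \ref{thm1}(ii)). This is condition (i).

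Conversely, assume (i). The coefficient relation says exactly that $\bar\phi-c\phi$ is constant, so Theorem \ref{theorem3.2} applies with $u=c$ and $S_\phi$ is hyponormal. Alternatively, normal operators are hyponormal, so (i) gives hyponormality immediately. Assume instead (ii). The system \eqref{uexp} gives $\bar\phi-u\phi=q_0-up_0\in H^\infty(\mathbb{D})$, and Theorem \ref{theorem3.2} again yields hyponormality.

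The only genuinely delicate step is the lemma on uniqueness of the $\bar z^m$-expansion with analytic coefficients. The $\bar\partial$-differentiation argument handles it cleanly because $u$ is analytic, so each $q_m-up_m$ is annihilated by $\bar\partial$.
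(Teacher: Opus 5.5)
Your proof is correct. Its skeleton is the paper's: reduce hyponormality to the existence of $u$ with $\bar\phi-u\phi$ analytic, handle the unimodular-constant case separately to get (i) by comparing coefficients of $z^i\bar z^j$, and read off (\ref{uexp}) as the vanishing of the $\bar z^m$-coefficients for $m\ge 1$.

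Where you differ is the coefficient-extraction step. The paper starts from the dichotomy of Theorem \ref{thm1} and sets $f=\bar\phi-u\phi$. It then uses the polarization Lemma \ref{Phizw} to pass from $\Phi(z,\bar z)\equiv 0$ to $\Phi(z,\bar w)\equiv 0$ with $z,w$ independent, and applies $\partial_{\bar w}^m$ at $w=0$. The converse is a separate direct computation. You instead note that $\bar\phi-u\phi=\sum_{m=0}^n\bar z^m(q_m-up_m)$ is a polynomial in $\bar z$ with holomorphic coefficients. Repeated $\partial_{\bar z}$ plus downward induction then isolates each coefficient without leaving the diagonal $w=z$.

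Your route buys two things. The argument is fully elementary, with no real-analytic expansions or polarization. It also gives an exact equivalence $\bar\phi-u\phi\in H^\infty(\mathbb{D})\iff up_m=q_m$ for $1\le m\le n$, which settles both directions at once. The paper's polarization route is the more general tool. It still works for symbols whose expansions involve infinitely many powers of $\bar z$, where your induction has no top degree to start from.

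One small precision: normality in (i) follows from Corollary 2.7 of \cite{SZ2002}, or from the sufficiency part of the proof of Theorem \ref{thm1}, where $H_{\bar\phi}=cH_\phi$ forces $S_\phi^*S_\phi=S_\phi S_\phi^*$ via (\ref{eqnf0}). It does not follow from the statement of Theorem \ref{thm1}(ii), which only asserts hyponormality.
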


On the Hardy space $H^2$,  in \cite{Halmos1970}, Halmos raised the question
``Is every subnormal Toeplitz operator either normal or analytic?"
In \cite{Ito1972}, Ito and Wong proved that if $\phi$ is a polynomial of $\chi$ and $\bar{\chi}$, where $\chi$
is an inner function, then $T_\phi$ is subnormal if and only if it is normal or
analytic.
Subsequently, Amemiya, Ito and Wong \cite{Ito1975} proved via similar arguments that Halmos's question admits a positive answer for quasinormal Toeplitz operators.
Abrahamse \cite{Abrahamse1976} gave a very general sufficient condition for an affirmative answer to Halmos's
question: if $\phi$ or $\bar{\phi}$ is of bounded type
and $T_{\phi}$ is subnormal, then $T_{\phi}$ is normal or analytic.
Surprisingly,  Cowen and Long \cite{Cowen1984}, \cite{Cowen1988} showed that the
answer to Halmos's question is negative and gave examples that $T_{\psi +\lambda\overline{\psi}}$ is subnormal but is neither normal nor analytic for some $\psi$ in $H^{\infty}$. In  \cite{S1984} and \cite{S19851}, Sun answered Abrahamse's question that when a Toeplitz operator is unitarily equivalent to a weighted shift, and the answer also yields counterexamples to Halmos's question.  In fact, the characterization of $\varTheta$-class Toeplitz operators in \cite{S1985} also presents counterexamples to Halmos's question. Cowen's survey paper \cite{CownNN} contains more details on hyponormal and subnormal operators.

A dual Toeplitz operator $S_\phi$ is called co-analytic if $\bar{\phi}$ is in $H^{\infty}(\mathbb{D})$. A co-analytic dual Toeplitz operator $S_{\phi}$ is the restriction of the multiplication operator $M_{\phi}$ on $(L^2_a)^{\perp}$ and hence is subnormal as $M_{\phi}$ is normal. About dual Toeplitz operators $S_{\phi}$ with bounded harmonic symbols we may ask the following question.
\begin{question}\label{Q2}
	Is every subnormal dual Toeplitz operator either normal or co-analytic?\label{Q1}
\end{question}
 In section 3, we will show the following result.
\begin{theorem}\label{DSNC}
	Let $\phi$ be a harmonic polynomial in $z$ and $\bar{z}$ on the unit disk.
	Then $S_{\phi}$ is subnormal if and only if $S_{\phi}$ is either normal or co-analytic.
\end{theorem}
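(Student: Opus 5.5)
The plan is to reduce, via the hyponormal classification, to one family of symbols and then show that family is never subnormal. Write $\phi=p+\bar q$ with $p,q$ analytic polynomials.

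The easy direction is immediate. A normal operator is subnormal, and a co-analytic $S_\phi$ is the restriction of the normal operator $M_\phi$ to the invariant subspace $(L^2_a)^\perp$, as noted before Question \ref{Q2}.

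For the converse, suppose $S_\phi$ is subnormal. Then it is hyponormal, and Theorem \ref{Cor00} applies because $\phi$ is a bounded harmonic function. In the first alternative $\phi=\xi(g+\bar g)+\beta$, so $\phi=\beta+\xi\psi$ with $\psi=2\Re g$ real-valued and bounded. By the Shaw–Zheng normality criterion recalled in the introduction, $S_\phi$ is normal. In the second alternative $\phi=\alpha g+\bar g+\beta$ with $|\alpha|<1$; since $\phi$ is a harmonic polynomial, $g$ is an analytic polynomial. If $\alpha=0$ or $g$ is constant, $S_\phi$ is co-analytic. So everything reduces to one claim: if $0<|\alpha|<1$ and $g$ is a nonconstant polynomial, then
$$T:=S_{\alpha g+\bar g}=A+\alpha A^*,\qquad A:=S_{\bar g},\ A^*=S_g,$$
is not subnormal. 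The additive constant $\beta$ is irrelevant.

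The first computation is the self-commutator. The $(2,2)$ entry of $M_{\bar\phi}M_\phi=M_\phi M_{\bar\phi}$ in the matrix representation (\ref{matixrep}) gives
$$S_\phi^*S_\phi-S_\phi S_\phi^*=H_{\bar\phi}H_{\bar\phi}^*-H_\phi H_\phi^* .$$
Here $H_g=0$, which gives $H_\phi=H_{\bar g}$ and $H_{\bar\phi}=\bar\alpha H_{\bar g}$. Substituting into the formula above,
$$[T^*,T]=(|\alpha|^2-1)\,H_{\bar g}H_{\bar g}^* .$$
With the hyponormality convention $A^*A-AA^*\ge 0$ this is $\le 0$ rather than $\ge 0$, which contradicts Theorem \ref{Cor00}'s verdict that $T$ is hyponormal. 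So I must first recheck the sign of this commutator, or equivalently whether Theorem \ref{Cor00} should be applied to $\phi$ or to $\bar\phi$. If the sign is confirmed, $T$ is not even hyponormal and the claim is immediate. If Theorem \ref{Cor00} is right as stated, the self-commutator is a positive multiple of $H_{\bar g}H_{\bar g}^*$ with nonzero coefficient, and the argument below is needed. It uses only that the coefficient is a nonzero constant $\kappa$.

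To refute subnormality I would use a necessary condition that only involves a few powers of $T$, namely $2$-hyponormality. Concretely, subnormality forces the operator matrix
$$\begin{pmatrix} [T^*,T] & [T^{*2},T]\\ [T^*,T^2] & [T^{*2},T^2]\end{pmatrix}\ \ge 0 .$$
Expanding $T=A+\alpha A^*$, every entry becomes a polynomial in $A$, $A^*$ and $\alpha$. Since $A$ is the restriction of a normal operator, the relation $A^*A-AA^*=H_{\bar g}H_{\bar g}^*$ lets me express each entry through Hankel products $H_{\bar g}H_{\bar g}^*$ sandwiched between powers of $A$ and $A^*$.

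Next I would test positivity on explicit vectors in $(L^2_a)^\perp$. Natural choices are $(I-P)(\bar z^k)$, or the orthogonal decomposition of $(L^2_a)^\perp$ into pieces on which $H_{\bar z}^*$ and $S_{\bar z}$ act like weighted shifts. On these, the Bergman weights $(n+1)^{-1}$ make every inner product explicitly computable. The aim is a $2\times 2$ compression whose determinant has the form $|\alpha|^2\cdot\kappa\cdot(\text{nonzero quantity from }g)$ with the wrong sign. This should come from the $\alpha A^*$ part producing cross terms like $\alpha\,[A^{*2},A^*]$-type contributions that the analytic Cowen–Long mechanism cannot absorb once $g$ is a polynomial.

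I would first do $g=z^N$ to isolate the leading behaviour, then handle a general $g$ by looking at the top-degree coefficient, using vectors $(I-P)(\bar z^k)$ with $k$ large. The main obstacle is this concrete $2$-hyponormality computation, which has to produce a strictly negative determinant for every $0<|\alpha|<1$. If it proves too messy, my fallback is a spectral argument. A subnormal operator with trace-class self-commutator (plausible here, since $H_{\bar g}$ for polynomial $g$ is Hilbert–Schmidt) has principal function and spectral picture constrained by the Berger–Shaw theorem. I would compare these with the spectral data of $T$, computed from the essentially normal decomposition of $A+\alpha A^*$, whose essential spectrum is the image of $\partial\mathbb D$ under $\alpha g+\bar g$.
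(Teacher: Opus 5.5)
\textbf{There is a genuine gap.} Your reduction matches the paper. Subnormal implies hyponormal, and Theorem \ref{Cor00} then leaves three possibilities: a normal operator, a co-analytic one, or $T=S_{\alpha g+\bar g}$ with $0<|\alpha|<1$ and $g$ a nonconstant polynomial. From that point on, however, the proposal is a program rather than a proof, and it also contains a sign error.

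The $(2,2)$ entry of $M_{\bar\phi}M_\phi=M_\phi M_{\bar\phi}$ is $H_{\bar\phi}H_{\bar\phi}^*+S_{\bar\phi}S_\phi=H_\phi H_\phi^*+S_\phi S_{\bar\phi}$. Hence $S_\phi^*S_\phi-S_\phi S_\phi^*=H_\phi H_\phi^*-H_{\bar\phi}H_{\bar\phi}^*$, exactly as in (\ref{eqnf0}); you reversed it. Your own correct identity $[A^*,A]=H_{\bar g}H_{\bar g}^*$, together with $[T^*,T]=(1-|\alpha|^2)[A^*,A]$, already gives $[T^*,T]\ge 0$. So the ``immediate'' branch is void, and the entire theorem rests on the computation you leave undone.

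\textbf{The missing computation.} You give neither a test vector nor an estimate for it. Your $2$-hyponormality framework is adequate in principle: the paper uses only the $(2,2)$ corner of your matrix, showing that $T^2$ is not hyponormal. But the vectors you suggest would detect nothing. Let $n=\deg g$ and $k>4n$. By (\ref{eqp2}), every intermediate product $\phi^i\bar\phi^j\bar z^k$ is a combination of monomials $z^a\bar z^b$ with $a<b$, so it already lies in $(L^2_a)^\perp$. Thus $T$ and $T^*$ act on these vectors as $M_\phi$ and $M_{\bar\phi}$, and $[T^{*j},T^i]\bar z^k=0$ for all $i,j\le 2$.

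The missing idea is to couple a low vector, where $P$ intervenes, with a threshold one. The paper takes $f=\bar z^n+t\bar z^{3n}$, after normalizing $\beta=0$, $g(0)=0$ and $\alpha\in(0,1)$.
\begin{itemize}
\item Since $T^2\bar z^{3n}=\phi^2\bar z^{3n}$ and $T^{*2}\bar z^{3n}=\bar\phi^2\bar z^{3n}$ have equal norms, $\|T^2f\|^2-\|T^{*2}f\|^2$ has no $|t|^2$ term.
\item Meanwhile $P(g\bar z^n)=a_n/(n+1)$ produces the correction $-\frac{\alpha a_n}{n+1}\bar g$ in $S_{\bar g}S_g\bar z^n$.
\item Pairing this correction with $S_{g^2}\bar z^{3n}=g^2\bar z^{3n}$, where $\langle g^2\bar z^{3n},\bar g\rangle=a_n^3/(3n+1)$, gives the linear term $\frac{2\alpha(1-\alpha^2)|a_n|^2}{(n+1)(3n+1)}\Re(ta_n^2)$.
\item This term is nonzero because $a_n\neq 0$. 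A Hermitian form with a vanishing diagonal entry and a nonzero off-diagonal entry is not positive, so a suitable $t$ makes the difference negative, and $T$ is not subnormal.
\end{itemize}

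\textbf{The fallback.} Your Berger--Shaw fallback does not close the gap either. Berger--Shaw and the principal-function trace formula constrain every hyponormal operator with trace-class self-commutator, and $T$ is hyponormal. Without a genuinely subnormal-specific ingredient, which you do not name, they yield no contradiction.
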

 In Section 3 we will also characterize $\varTheta$-class dual Toeplitz operators with bounded harmonic symbols and show that every $\varTheta$-class dual Toeplitz operator with a bounded harmonic symbol must be  normal.

\section{Proof of the main results}\label{sec2}

In this section,  using  the Douglas theorem (Theorem \ref{Thm00} )  we will present the proof of Theorem \ref{thm1}.  Then using Theorem \ref{thm1} we will derive Theorems \ref{thm5}, \ref{Cor00}, and \ref{poly}.

First noting that $M_{\phi\psi}=M_{\phi}M_{\psi}$, using the matrix representation (\ref{matixrep}) of the multiplication operator $M_{\phi}$, we have the following  useful algebraic relations between Toeplitz operators, Hankel operators and dual Toeplitz operators:
\begin{eqnarray}
	T_{\phi\psi} &=& T_{\phi}T_{\psi}+H_{\bar{\phi}}^*H_{\psi},\label{TH}\\
	S_{\phi\psi} &=& S_{\phi}S_{\psi}+H_{\phi}H_{\bar{\psi}}^*, \label{SH}\\
	H_{\phi\psi} &=& H_{\phi}T_{\psi}+S_{\phi}H_{\psi}.  \label{HST}
\end{eqnarray}

\begin{proof}[{\bf Proof of Theorem \ref{thm1}}] Assume that $\phi$ is not constant as   it is obvious that the result holds for a constant  $\phi$.

 For the necessity, suppose $S_{\phi}$ is hyponormal. Then
 $$S_{\phi}^*S_{\phi}-S_{\phi}S_{\phi}^*\geq 0.$$
Use identity (\ref{SH}) to get
\begin{eqnarray}
S_{\phi}^*S_{\phi}-S_{\phi}S_{\phi}^*&=&S_{\bar{\phi}}S_{\phi}-S_{\bar{\phi}\phi}+S_{\bar{\phi}\phi}-S_{\phi}S_{\bar{\phi}} \notag\\
&=&H_{\phi}H_{\phi}^*-H_{\bar{\phi}}H_{\bar{\phi}}^* \label{eqnf0}.
\end{eqnarray}
Thus we have
$$H_{\bar{\phi}}H_{\bar{\phi}}^* \leq H_{\phi}H_{\phi}^*.$$
As from the definition of the Hankel operator $H_{\psi}$, for $\psi\in L^{\infty}(\mathbb{D})$,  $H_{\psi}$ equals zero if and only if $\psi$ is analytic on the unit disk, first we note that $\phi$ is not analytic on the unit disk. Otherwise, $H_{\phi}$ must be zero and the above inequality gives that $H_{\bar{\phi}}$ equals zero and hence both $\phi$ and $\bar{\phi}$ are analytic on the unit disk. This implies that $\phi$ is constant. It is a contradiction.

By the Douglas  theorem (Theorem \ref{Thm00}), the above inequality gives that
there exists a bounded operator $B$ on $L^2_a$ with norm $\|B\| \leq 1$ such that
\begin{equation}\label{proof1}
	H_{\bar{\phi}} = H_{\phi}B.
	\end{equation}
Next we will show that $B$ is an analytic Toeplitz operator. To do this,
let $u=B1$. Then $u$ is in the Bergman space  $L^2_a$ and hence it is analytic on the unit disk.
Since $$H_{\bar{\phi}}1=H_{\phi}B1=H_{\phi}u,$$
we have
 $$(I-P)({\bar{\phi}}-\phi u)=0$$  to get that $ \bar{\phi}-\phi u$ is in the Bergman space $L_a^2.$
Let  $f =\bar{\phi}-\phi u$. Thus
\begin{equation*}
	\bar{\phi} =\phi u+f,
\end{equation*} and so for $g\in H^{\infty}(\mathbb{D}),$   we have
\begin{align*}
	H_{\bar{\phi}}g&=(I-P)(\bar{\phi}g)\\
	&=(I-P)[(\phi u+f)g]\\
	&=(I-P)(\phi ug)+(I-P)(fg)\\
	&=(I-P)(\phi ug)\\
	&=H_{\phi}(ug).
\end{align*}
The fourth equality follows from the fact that $fg$ is in the Bergman space $L^2_a$.
On the other hand, (\ref{proof1}) gives that for $g\in H^{\infty}(\mathbb{D}),$
\begin{align*}
	H_{\bar{\phi}}g&=H_{\phi}Bg.
\end{align*}
This implies
$$H_{\phi}(ug)=H_{\phi}Bg,$$
and hence
$$H_{\phi}(ug-Bg)=H_{\phi}(ug)-H_{\phi}Bg=0,$$
for $g\in H^{\infty}(\mathbb{D}).$
 By   the Das theorem \cite{Das1999} that
 $\text{Ker}~ H_{\phi}=\{0\}$ for  a non-analytic bounded function $\phi$ on $\mathbb{D}$, we have
 $$Bg=ug$$
 for all $g\in H^{\infty}(\mathbb{D})$.

 For completeness, we include a proof of the Das theorem \cite{Das1999} here. In fact, if $h$ is in $\text{Ker}~ H_{\phi}$,
 then $$0=H_\phi h =(I-P)(\phi h).$$
 Let $g=P(\phi h)$. If $h$ is not equal to the zero function on the unit disk, the above equation gives
   $$\phi (z)=\frac{g(z)}{h(z)}$$
   holds on $\mathbb{D}$ except for the zero set $\mathcal{Z}(h)$ of $h$ that is countable and the right hand side of the above equation is also analytic on $\mathbb{D}\backslash \mathcal{Z}(h)$ since both $g$ and $h$ are analytic on $\mathbb{D}. $ As $\phi$ is bounded on $\mathbb{D}$, $\phi$ is bounded and analytic on $\mathbb{D}$. This is a contradiction.

 Next using the Berezin transform on the Bergman space we will show that $u$ is in $H^{\infty}(\mathbb{D})$.
Since $\|B\|\leq1$, its Berezin transform
$$|\langle Bk_z,k_z\rangle |\leq \|B\|\leq 1.$$
Noting each normalized reproducing kernel $k_z$ is in $H^{\infty}(\mathbb{D})$, we have
$$ \langle Bk_z,k_z\rangle = \langle uk_z,k_z\rangle =u(z),$$
for $ z\in\mathbb{D}.$ The last equality follows from the property of reproducing kernels. Thus $ u$ is in $H^{\infty}(\mathbb{D})$ with $\|u\|_{\infty}\leq1$. As $H^{\infty}(\mathbb{D})$ is dense in the Bergman space $L^2_a$, we have
$$Bh=uh =T_{u}h,$$
 for $h$ in the Bergman space $L^2_a$ to get that $B$ is equal to the analytic Toeplitz operator $T_u$.

As $\phi$ and $u$ are bounded, we have
\begin{equation}\label{eqnf1}
f(z)=\overline{\phi (z)}-\phi (z) u(z)
\end{equation}
is in $H^{\infty}(\mathbb{D})$.
Take the conjugate on both sides of the above equality to get
\begin{equation}\label{eqnf2}
\overline{f(z)}= {\phi (z)}-\overline{\phi (z)} ~\overline {u(z)}.
\end{equation}
 Multiply both sides of (\ref{eqnf1}) by $\overline{u (z)}$ and then use (\ref{eqnf2})
 to get
 \begin{equation}\label{eqnf3}
    (1-|u(z)|^2)\phi (z)=\overline{u (z)}f(z)+\overline{f (z)}.
 \end{equation}
 If $1-|u(z)|^2=0$ for $z\in  \mathbb D$,  as $u$ is analytic on the unit disk,  by the Maximum Modulus Theorem, then $u$ is a unimodular constant $c$ and the above equality gives
 $$\overline{f (z)}= -\overline{c}f (z),$$
 for $z\in \mathbb D$. This implies that $f$ is constant and hence (\ref{eqnf1}) gives
 $\overline{\phi (z)}-c{\phi (z)}$ is a constant on the unit disk.

 If $u$ is not a unimodular constant, then $\|u\|_{\infty}\leq 1$ and $|u(0)|<1.$ Solving for $\phi (z)$ in Equation (\ref{eqnf3}) gives
 $$\phi (z)=\frac{\overline{u(z)}f(z)+\overline{f(z)}}{1-|u (z)|^2}.$$ This completes the proof of  the necessity.

For the sufficiency, if there is a unimodular constant $c$ such that  $\overline{\phi(z)}-c\phi(z) $ is constant for $z\in \mathbb{D}$, we have
$$H_{\bar{\phi}}=cH_\phi, $$
to get
$$H_{\bar{\phi}}H_{\bar{\phi}}^*=|c|^2H_{ {\phi}} H_{ {\phi}}^*=H_{ {\phi}} H_{ {\phi}}^*.$$
Thus (\ref{eqnf0}) gives that $S_\phi$ is normal.

If there exist $u$ and $f$ in $  H^{\infty}(\mathbb{D})$ with $\|u\|_{\infty}\leq1$ and $|u(0)|<1$ such that
  $$\phi (z)=\frac{\overline {u (z)}f(z)+\overline{f(z)}}{1-|u(z)|^2},$$
  then take the conjugate on both sides of the above equality to get
  $$\overline{\phi (z)}=\frac{  {u (z)}\overline{f(z)}+ {f(z)}}{1-|u(z)|^2}.$$
  Thus we have
  \begin{eqnarray*}
  	\overline{\phi (z)}-u(z) {\phi (z)}&=&\frac{ [u(z)\overline{f(z)}+ {f(z)}] -u(z)[\overline {u (z)}f(z)+\overline{f(z)}]}{1-|u(z)|^2}\\
  	&=&f(z),
  \end{eqnarray*}
  to get
    $$H_{\bar{\phi}}=H_{u   {\phi  }+f}=H_{u   {\phi  } }=H_{\phi}T_{u}.$$
   The second equality follows from that $H_f=0$ as $f$ is in $H^{\infty}$ and the third equality follows from  (\ref{HST})
   as $u$ is in $H^{\infty}(\mathbb{D})$.
   This implies
    $$H_{\bar{\phi}}H_{\bar{\phi}}^*=H_{\phi}T_uT_{u}^*H_{\phi}^*\leq H_{\phi} H_{\phi}^*. $$
    The last inequality follows from
    $$T_uT_u^*\leq T_{u}^*T_u=T_{|u|^2}\leq I.$$
   Thus (\ref{eqnf0}) gives that $S_\phi$ is hyponormal.
This completes the proof.
\end{proof}

\begin{remark}
	\begin{enumerate}[(a)]
		\item If $f(z)\neq 0$ for  some $z\in \mathbb{D}$,  as
		$$\phi (z)=\frac{\overline{u(z)}f(z)+\overline{f(z)}}{1-|u (z)|^2},$$
		Theorem \ref{thm1} tells us that $S_{\phi}$ is hyponormal but not normal.
		\item By virtue of Theorem \ref{thm1}, one can construct numerous hyponormal dual
Toeplitz operators with non-harmonic symbols. For example, take
 $f(z)=z$ and $u(z)=\frac{z}{2},$ let
		$$\phi (z)=\frac{\overline{u(z)}f(z)+\overline{f(z)}}{1-|u (z)|^2}=\frac{\frac{|z|^2}{2}+\bar{z}}{1-\frac{|z|^2}{4}}.$$
		Clearly $\phi$ is not harmonic on the unit disk but Theorem \ref{thm1} implies that $S_{\phi}$ is hyponormal.
	\end{enumerate}
\end{remark}

 Theorem \ref{thm1} immediately gives that hyponormal dual Toeplitz operators with radial symbols  must be normal.

 \begin{proof}[{\bf Proof of Theorem \ref{thm5}}]
  Suppose that $S_{\phi}$ is hyponormal. Then by Theorem \ref{theorem3.2}, there exists a function $u\in H^{\infty}(\mathbb{D})$ with $\|u\|_{\infty}\leq 1$ such that $\bar{\phi}-u\phi$ is analytic. Since $\phi(z)=\phi(|z|)=\phi(r)$ for each $z\in \mathbb{D}$ with polar coordinates representation $z=re^{i\theta}$,  the Cauchy Integral Formula gives
 	\begin{align*}
 		\bar{\phi}(0)-u(0)\phi(0)
 		&=\frac{1}{2\pi}\int_0^{2\pi}\left(\bar{\phi}(re^{i\theta})-u(re^{i\theta})\phi(re^{i\theta})\right)d\theta\\
 		&=\frac{1}{2\pi}\int_0^{2\pi}\left(\bar{\phi}(r)-u(re^{i\theta})\phi(r)\right)d\theta\\
 		&=\bar{\phi}(r)-\frac{1}{2\pi}\int_0^{2\pi}u(re^{i\theta})d\theta\cdot\phi(r)\\
 		&=\bar{\phi}(z)-u(0)\phi(z).
 	\end{align*}
  The first equality holds because $\overline{\phi (z)}-u(z)\phi (z)$ is analytic on the unit disk and the last equality follows from that $u(z)$ is analytic on the unit disk. Corollary 2.7 in \cite{SZ2002} implies that $S_{\phi}$ is normal.
 \end{proof}

 Next, using Theorem \ref{thm1} we will present the proof of Theorem \ref{Cor00}.

\begin{proof}[{\bf Proof of Theorem \ref{Cor00}}]
  Suppose $\phi$ is a bounded and harmonic function on the unit disk. Then $\phi$ can be written as a sum of an analytic function and the conjugate of another analytic function on the unit disk, i.e.,
  $$\phi (z)=f(z)+\overline{g(z)}$$
  for $z\in \mathbb D$ where $f$ and $g$ are analytic on the unit disk. As $f=P(\phi)$ and the Bergman projection $P$ is a bounded operator from $L^{\infty} (\mathbb{D})$ to $BMOA$, we have that both $f$ and $g$ are in $BMOA$.
  Take the conjugate on both sides of the above equality to get
   $$\overline{\phi (z)}=\overline{f(z)}+ {g(z)}$$
   for $z\in \mathbb D$. For a function $u\in H^{\infty}(\mathbb{D})$ with $\|u\|_{\infty}\leq 1$,
   $$\overline{\phi (z)}-u(z)\phi (z)=\overline{f(z)}-u(z)\overline{g(z)}+g(z)-u(z)f(z)$$
   is analytic on the unit disk if and only if
   $\overline{f(z)}-u(z)\overline{g(z)}$ is analytic on the unit disk if and only if
   $$\frac{\partial}{\partial \bar{z}}[\overline{f(z)}-u(z)\overline{g(z)}]=0,$$
   for $z\in \mathbb D$. The above equation is equivalent to the following equation
   \begin{equation}\label{eqnf11}
   	 \overline{f^{\prime}(z)}-u(z)\overline{g^{\prime}(z)}  =0,
   \end{equation}
   for $z\in \mathbb D$.

   If neither $f$ nor $g$ is a constant, the above equation holds if and only if
   $$u(z) =\frac{\overline{f^{\prime}(z)}}{\overline{g^{\prime}(z)}}$$
   for $z\in \mathbb D$ except for countable points.  In this case,  $u$ must be a constant $\overline{\alpha}$ with $|\alpha |\leq 1.$ Thus
   $$f(z)=\alpha g(z)+\beta$$
   for $z\in \mathbb D$ and some constant $\beta .$ So
   $$\phi (z)= \alpha g(z)+\beta +\overline{g(z)}.$$

    If $g$ is a constant, (\ref{eqnf11}) gives that $f$ is constant and hence $\phi$ is constant.
    If $f$ is a constant, then (\ref{eqnf11}) holds if and only if either $u=0$ or $g$ is a constant. Therefore, $\phi (z) =\overline{g(z)}+\beta$ for some constant $\beta$ in the case that $u=0$ or $\phi$ is a constant in the case that $g$ is a constant.

    To summarize the above discussion, we get that
    the set $$\{u\in H^{\infty}(\mathbb{D}):\|u\|_{\infty}\leq1~\text{and}~\bar{\phi}-u\phi\in H^{\infty}(\mathbb{D})\}$$ is not empty if and only if
  there are constants $\alpha$ and $\beta$ with $|\alpha|\leq1$ and a function $g$ in $BMOA$ such that
  $\phi=\alpha g+\bar{g}+\beta$.  By Theorem \ref{theorem3.2},  we have that $S_\phi$ is hyponormal if and only if there are constants $\alpha$ and $\beta$ with $|\alpha|\leq1$ and a function $g$ in $BMOA$ such that
  $\phi=\alpha g+\bar{g}+\beta$.

  Moreover, if $\alpha$ is a unimodular constant $e^{it}$ for some real number $t$,
  let $h=e^{i\frac{t}{2}}g.$ Then $h$ is still in $BMOA$ and
  $$\phi=\alpha g+\bar{g}+\beta =e^{i\frac{t}{2}}(h+\bar{h})+\beta.$$
  Since $\phi$ is bounded on the unit disk, we have that $\Re h$ is in $L^{\infty}(\mathbb{D}).$

  This completes the proof.

\end{proof}

\begin{remark}\label{rmk11}
Theorem \ref{Cor00} demonstrates that dual Toeplitz operators with nonconstant analytic symbols cannot be hyponormal while dual Toeplitz operators with conjugate analytic symbols are hyponormal, indeed they are subnormal as they equal the restriction of the multiplication operator   on $(L^2_a)^{\perp}.$
Comparing Corollary 2.7 in \cite{SZ2002} with Theorem \ref{Cor00},   we see that  some hyponormal dual Toeplitz operators with bounded harmonic symbols may not  be normal.
 Indeed, for $\phi=\alpha g+\bar{g}+\beta,$  $S_{\phi}$ is normal if and only if  $|\alpha|=1$.
\end{remark}

In order to present the proof of Theorem \ref{poly}, we need the following important lemma.

\begin{lemma}\label{Phizw} Suppose   $\Phi (z, \bar{z})$ is a real analytic function on the unit disk, i.e.,
	$ \sum\limits_{i=0}^{\infty}\sum\limits_{j=0}^\infty a_{ij}z^i{\bar{z}}^j$
	converges to $\Phi (z, \bar{z})$.
	If $\Phi (z, \bar{z})\equiv 0$ for $z\in \mathbb{D}$, then
	$$\Phi (z, \bar{w})\equiv 0,$$
	for $z, w\in\mathbb{D}.$
\end{lemma}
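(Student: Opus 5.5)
The plan is to show first that every coefficient $a_{ij}$ vanishes. The conclusion then follows at once, provided we read $\Phi(z,\bar w)$ as the sum $\sum_{i,j} a_{ij} z^i \bar w^j$. The cleanest route is through polar coordinates and Fourier coefficients.

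Write $z=re^{i\theta}$ with $0\le r<1$. Then
$$\Phi(z,\bar z)=\sum_{i,j} a_{ij} r^{i+j} e^{i(i-j)\theta}.$$
We understand convergence of the double series as absolute convergence on $\mathbb{D}$, that is, $\sum |a_{ij}| r^{i+j}<\infty$ for every $r<1$; this is the standard meaning of real analyticity given by such an expansion. Under this assumption we may integrate term by term in $\theta$. Fix an integer $k$ and multiply by $e^{-ik\theta}$. Integrating over $[0,2\pi]$ gives, for every $r\in[0,1)$,
$$0=\frac{1}{2\pi}\int_0^{2\pi}\Phi(re^{i\theta},re^{-i\theta})e^{-ik\theta}\,d\theta=\sum_{i-j=k} a_{ij} r^{i+j}.$$
For $k\ge 0$, the terms with $i-j=k$ are indexed by $j\ge0$, with $i=j+k$. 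So the right-hand side is $r^{k}\sum_{j} a_{j+k,j} r^{2j}$, a power series in $r^2$ that converges on $[0,1)$ and vanishes there identically. By the identity theorem for one-variable power series, $a_{j+k,j}=0$ for all $j$. The case $k<0$ is symmetric. Hence all $a_{ij}=0$.

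With every coefficient zero, the series $\sum a_{ij} z^i\bar w^j$ is identically zero for $z,w\in\mathbb{D}$, which is the claim. Note that for $|z|,|w|<1$ we have $|a_{ij} z^i \bar w^j|\le |a_{ij}|\rho^{i+j}$ with $\rho=\max(|z|,|w|)$, so $\Phi(z,\bar w)$ is well defined by absolute convergence.

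The main obstacle is justifying the term-by-term integration. This requires absolute (or uniform on circles) convergence of the double series, and we must state it as part of the meaning of ``converges''. If only some unordered convergence were given, I would first try to upgrade it via an Abel-type argument on smaller discs.

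An alternative route avoids Fourier analysis. Differentiate $\Phi(z,\bar z)\equiv0$ with respect to $\partial_z$ and $\partial_{\bar z}$ at $0$. The derivatives $\partial_z^i\partial_{\bar z}^j\Phi(0)=i!\,j!\,a_{ij}$ must all vanish, which again gives $a_{ij}=0$. This also needs term-by-term differentiation, justified by the same absolute convergence.
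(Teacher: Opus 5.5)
Your argument is correct, and your main route differs from the paper's. Your ``alternative route'' is exactly what the paper does. The paper applies $\partial_z^i\partial_{\bar z}^j$ to the expansion and evaluates at the origin, obtaining $i!\,j!\,a_{ij}=0$. In doing so it silently uses term-by-term differentiation of the double series, which needs the same absolute convergence you state explicitly.

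You instead restrict to circles $|z|=r$ and isolate the diagonal $i-j=k$ as the $k$-th Fourier coefficient in $\theta$. This reduces the claim to the one-variable identity theorem for $\sum_j a_{j+k,j}s^j$ with $s=r^2$.

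Comparing the two routes:
\begin{itemize}
\item Yours needs only term-by-term integration, that is, uniform convergence on each circle via the M-test from $\sum|a_{ij}|r^{i+j}<\infty$. It does not need differentiation of a double series in the Wirtinger variables, and it makes the reduction to a single-variable statement transparent.
\item The paper's is shorter.
\item Both use only that $\Phi$ vanishes on a neighborhood of $0$.
\end{itemize}

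You also handle a point the paper glosses over. $\Phi(z,\bar w)$ has to be read as $\sum a_{ij}z^i\bar w^j$, and it is well defined on $\mathbb{D}\times\mathbb{D}$ by the bound $|a_{ij}z^i\bar w^j|\le|a_{ij}|\rho^{i+j}$.

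Your absolute-convergence reading of ``converges'' costs nothing in the paper's application in the proof of Theorem~\ref{poly}. There $\Phi$ is built from $f,u\in H^\infty(\mathbb{D})$ and polynomials in $z,\bar z$, so its double series converges absolutely on $\mathbb{D}$. Your hedge about ``unordered convergence'' is moot, because unordered summability of a family of complex numbers already is absolute summability.

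A cosmetic point: rename your summation index so it does not clash with the imaginary unit in $e^{i(i-j)\theta}$.
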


\begin{proof}   Suppose   $\Phi (z, \bar{z})$ is a real analytic function on the unit disk and
	$$	\Phi (z, \bar{z}) =\sum\limits_{i=0}^{\infty}\sum\limits_{j=0}^\infty a_{ij}z^i{\bar{z}}^j.$$
	Take partial derivatives $\partial_z^i\partial_{\bar{z}}^j$ on both sides of the above equality and then evaluate value at $z=0$ and $\bar{z}=0$ to get
	$$\partial_z^i\partial_{\bar{z}}^j[\Phi (z, \bar{z})]|_{z=0,\bar{z}=0}=i!
	 j!a_{ij}.$$
	If $\Phi (z, \bar{z})\equiv 0$ for $z\in \mathbb{D}$, then 	
    $$\partial_z^i\partial_{\bar{z}}^j[\Phi (z, \bar{z})]|_{z=0,\bar{z}=0}=0.$$
	Thus $a_{ij}=0$ for $i, j=0, 1, \cdots , $ and so
	$$	\Phi (z, \bar{w}) =\sum\limits_{i=0}^{\infty}\sum\limits_{j=0}^\infty a_{ij}z^i{\bar{w}}^j=0.$$
	This completes the proof.
\end{proof}

  \begin{proof}[{\bf Proof of Theorem \ref{poly}}]  Suppose $\phi$ is a polynomial of $z$ and $\bar{z}$:
  	$$\phi (z)=\sum_{i=0}^{n}\sum_{j=0}^n a_{ij}z^i{\bar{z}}^j.$$
  We write $\phi (z)$ in the following two forms:
  \begin{eqnarray*}
  	\phi (z)&=& g(z)+\overline{h(z)}+\sum_{j=1}^n g_j(z)\bar{z}^j \\
  	&=& g(z)+\overline{h(z)}+\sum_{j=1}^n\overline{h_j(z)}z^j,
  \end{eqnarray*}
  	where
  	\begin{eqnarray*}
  		 g(z)&=&\sum_{i=0}^na_{i0}z^i\\
  		 h(z)&=&\sum_{j=1}^n\overline{a_{0j}}{z}^j\\
  		 g_j(z)&=&\sum_{i=1}^na_{ij}z^i\\
  		 h_j(z) &=&\sum_{i=1}^n\overline{a_{ji}}z^i
  	\end{eqnarray*}  for $j=1, \cdots , n.$
  	
  First	suppose  $S_{\phi}$ is hyponormal.  By Theorem \ref{thm1}, we have one of the following two conditions holds:
  \begin{enumerate}[(a)]
  	\item  there exist  a function $f\in  H^\infty (\mathbb{D})$ and a function $u\in H^\infty (\mathbb{D})$ with     $\|u\|_{\infty}\leq 1$ and $|u(0)|<1$ such that  $$\phi (z)=\frac{\overline{u(z)}f(z)+\overline{f(z)}}{1-|u (z)|^2}$$
  	for $z\in \mathbb{D};$
  	\item  there is a unimodular constant $c$ such that  $\overline{\phi(z)}-c\phi(z) $ is a constant for $z\in \mathbb{D}$.
  \end{enumerate}
  If Condition (b) holds,
   there is a unimodular constant $c$ such that
   \begin{eqnarray*}
   	  \sum_{i=0}^{n}\sum_{j=0}^n \overline{a_{ij}}\bar{z}^i{ {z}}^j-c \left(\sum_{i=0}^{n}\sum_{j=0}^n a_{ij}z^i{\bar{z}}^j \right)
   	 =\sum_{i=0}^{n}\sum_{j=0}^n(\overline{ a_{ji}}-ca_{ij})z^i{\bar{z}}^j
   \end{eqnarray*}
   is a constant. This implies that the dual Toeplitz operator $S_{\phi}$ is normal. As $\{z^i\bar{z}^j:i, j=0, \cdots , n\}$ are linear independent, we have
   $$\overline{ a_{ji}}=ca_{ij}$$
   for     $(i, j)\in \{0, \cdots, n\}\times \{0, \cdots, n\}$ with $(i, j)\neq (0, 0)$.

  If Condition (a) holds,
   in the proof of Theorem \ref{thm1},	we have shown that
  	  \begin{eqnarray*}
  	  f(z)&=&\overline{\phi (z)}-u(z)\phi (z)\\
  	      &=& \overline{g(z)}+ {h(z)}+\sum_{j=1}^n {h_j(z)}{\bar{z}}^j-u(z)[g(z)+\overline{h(z)}+\sum_{j=1}^n g_j(z)\bar{z}^j]
  	  \end{eqnarray*}
   Let
   $$\Phi (z, \bar{z}) =f(z)-\left[\overline{g(z)}+ {h(z)}+\sum_{j=1}^n {h_j(z)}{\bar{z}}^j-u(z)\big[g(z)+\overline{h(z)}+\sum_{j=1}^n g_j(z)\bar{z}^j\big]\right]. $$
   As $\Phi (z, \bar{z})\equiv 0$ for $z\in \mathbb{D},$ Lemma \ref{Phizw} implies that $\Phi (z, \bar{w})\equiv 0$ for $z, w\in \mathbb{D},$ and hence
  \begin{equation}\label{Phizw2}
 f(z)= \overline{g(w)}+ {h(z)}+\sum_{j=1}^n {h_j(z)}{\bar{w}}^j-u(z)\big[g(z)+\overline{h(w)}+\sum_{j=1}^n g_j(z)\bar{w}^j\big],
  \end{equation}  for $z, w\in\mathbb{D}.$

       Take partial derivatives  $\partial_{\bar{w}}^m$  on both sides of Equation (\ref{Phizw2}) for $1 \leq m\leq n$ to get
    \begin{eqnarray*}
    	0 &=& \overline{g^{(m)}(w)}+\sum_{j=m}^n  {h_j(z)}\frac{j!}{(j-m)!}\bar{w}^{j-m}-u(z)\big[\overline{ h^{(m)}(w)}+\sum_{j=m}^n  {g_j(z)}\frac{j!}{(j-m)!}\bar{w}^{j-m}\big]\\
    	&=&  \sum_{j=m}^n \big[\frac{\overline{g^{(j)}(0)}}{(j-m)!}+ {h_j(z)}\frac{j!}{(j-m)!}\big]\bar{w}^{j-m}-u(z)\left[ \sum_{j=m}^n \big[\frac{\overline{ h^{(j)}(0)}}{(j-m)!}+ {g_j(z)}\frac{j!}{(j-m)!}\big]\bar{w}^{j-m}\right]
    \end{eqnarray*}
Evaluating both sides of the above equation at $w=0$ we have
 \begin{eqnarray*}
 	0 &=& \overline{g^{(m)}(0)}+  {h_m(z)}m!-u(z)[\overline{ h^{(m)}(0)}+  {g_m(z)}m!]\\
 	  &=& \overline{a_{m0}}m!+{h_m(z)}m!-u(z)[ a_{0m}m!+g_{m}(z)m!]
 \end{eqnarray*}
Thus for $z\in \mathbb{D}$, we have
\begin{eqnarray*}
 u(z)\big({a_{0m} +g_{m}(z) }\big)&=& {\overline{a_{m0}} +{h_m(z)} },
\end{eqnarray*}
equivalently,
\begin{eqnarray*}
 u(z)\big({a_{0m} + \sum_{i=1}^na_{im}z^i }\big)&=& {\overline{a_{m0}} +\sum_{i=1}^n\overline{a_{mi}}z^i },
\end{eqnarray*}
thus
\begin{eqnarray*}
u(z)\left({  \sum_{i=0}^na_{im}z^i }\right) &=&  { \sum_{i=0}^n\overline{a_{mi}}z^i }
\end{eqnarray*}
for each $m\in\{1, \cdots , n\}.$  The last equation is (\ref{uexp}) as desired.

    Conversely, if Condition (i) in Theorem \ref{poly} holds, then $S_{\phi}$ is normal and hence hyponormal.

    Suppose (\ref{uexp}) holds for a function $u\in H^{\infty}(\mathbb{D})$ with $\|u\|_{\infty}\leq 1$ and $|u(0)|<1$. For each $m\in\{1, \cdots , n\}$, we have
   \begin{equation}\label{ugh}
   	 u(z)\left(\sum_{i=0}^na_{im}z^i\right)=\sum_{i=0}^n\overline{a_{mi}}z^i,
   \end{equation}
    to get
    \begin{eqnarray*}
    	u(z)\phi (z)&=& u(z)\left(\sum_{i=0}^{n}\sum_{j=0}^n a_{ij}z^i{\bar{z}}^j\right)\\
    	&=&\sum_{j=0}^{n}u(z)\left(\sum_{i=0}^n a_{ij}z^i\right){\bar{z}}^j\\
    		&=&u(z)\left(\sum_{i=0}^n a_{i0}z^i\right)+\sum_{j=1}^{n}u(z)\left(\sum_{i=0}^n a_{ij}z^i\right){\bar{z}}^j\\
    			&=&u(z)\left(\sum_{i=0}^n a_{i0}z^i\right)+\sum_{j=1}^{n}\sum_{i=0}^n\overline{a_{ji}}z^i{\bar{z}}^j\\
    			&=&u(z)\left(\sum_{i=0}^n a_{i0}z^i\right)-\sum_{i=0}^n\overline{a_{0i}}z^i+\sum_{j=0}^{n}\sum_{i=0}^n\overline{a_{ji}}z^i{\bar{z}}^j\\
    			&=&u(z)\left(\sum_{i=0}^n a_{i0}z^i\right)-\sum_{i=0}^n\overline{a_{0i}}z^i+\overline{\phi (z)}.
    		\end{eqnarray*}
    	The fourth equality follows from (\ref{ugh}) and the sixth equality follows from
    	$$\overline{\phi(z)}=  \sum_{j=0}^{n}\sum_{i=0}^n\overline{a_{ji}}z^i{\bar{z}}^j.$$   Thus
    	$$\overline{\phi (z)} -u(z)\phi (z)=-u(z)\left(\sum_{i=0}^n a_{i0}z^i\right)+\sum_{i=0}^n\overline{a_{0i}}z^i$$
    	is analytic on the unit disk $\mathbb{D}$.  By Theorem \ref{theorem3.2}, the dual Toeplitz operator $S_{\phi}$ is hyponormal.


\end{proof}

\begin{remark}
	We will show two examples that hyponormal dual Toeplitz operators with
	polynomial of $z$ and $\bar{z}$ are not normal.
	\begin{enumerate}[(a)]
	  \item If $g(z)$ is a polynomial of $z$ and $\alpha$ is a constant with $0<|\alpha |<1$, let $\phi(z)=\alpha g(z)+\overline{g(z)}$. Then $\phi(z)$ is a harmonic polynomial of $z$ and $\bar{z}$.  Theorem \ref{Cor00} gives that $S_{\phi}$ is hyponormal but not normal.
	
      \item The following example provides  a hyponormal dual Toeplitz operator with non-harmonic polynomial symbol of $z$ and $\bar{z}$ that is not normal.

Let $\phi(z)=1 +5z+10\bar{z} +|z|^2$. Clearly, $\phi$ is a non-harmonic polynomial of $z$ and $\bar z$. Then $\phi$ satisfies Condition (ii) in the above theorem. In fact, let
$$u(z)=\frac{5+z}{10+z}.$$
Thus we have
\begin{align*}
	\overline{\phi(z)}-u(z)\phi(z)
    &= 1+5\bar{z}+10z+|z|^2- \frac{5+z}{10+z}(1 +5z+10 \bar{z} +|z|^2 )\\
	&=1+10z+(5+z)\bar{z}-\frac{(5+z)(1+5z)}{10+z}-\frac{(5+z)(10+z )\bar{z}}{10+z}\\
	&=1+10z-\frac{(5+z)(1+5z)}{10+z}
\end{align*}
is bounded and analytic on the unit disk and
\begin{eqnarray*}
	|u(z)|&=&\left|\frac{5+z}{10+z}\right|\\
	&\leq & \frac{1}{2}+\left |\frac{z}{2(10+z)}\right|\\
	&\leq & \frac{1}{2}+\frac{1}{2(10-1)} <\frac{3}{4}
\end{eqnarray*}
for $z\in \mathbb{D}.$ So  we have that $\|u\|_{\infty}<1$ and $|u(0)|=\frac{1}{2}, $ to get that $S_{\phi}$ is hyponormal but not normal.
	
\end{enumerate}
\end{remark}

\section{subnormal and $\varTheta$-class dual Toeplitz operators}\label{sec3}

Regarding Question \ref{Q2}, Theorem \ref{DSNC} provides an affirmative answer for dual Toeplitz operators with harmonic polynomial symbols.

  In \cite{Ito1972}, Ito and Wong presented the interesting
example   $T_{z+\frac{1}{2}\bar{z}}$ on the Hardy space $H^2$. This is an easy example of a hyponormal operator
that is not subnormal.
For dual Toeplitz operators, by showing that some of these operators with harmonic polynomial symbols are
hyponormal but not subnormal, the following result reveals a clear distinction between the
subnormal and hyponormal operator classes.

\begin{proof}[\bf Proof of Theorem \ref{DSNC}.]
	Suppose that $\phi$ is a   harmonic polynomial on the unit disk.
Since any normal operator is  subnormal, and $M_{\phi}$ provides a natural normal extension for the dual Toeplitz operator $S_{\phi}$ whenever $\phi$ is co-analytic, the sufficiency follows immediately.

For the necessity, suppose $S_{\phi}$ is subnormal.
Then $S_{\phi}$ must be hyponormal.
By Theorem \ref{Cor00}, there exist an analytic polynomial $g$ and constants $\alpha, \beta$ with $|\alpha|\leq 1$ such that
$\phi=\alpha g + \bar{g} + \beta$.
If $|\alpha|= 1$, then $S_{\phi}$ is normal by Remark \ref{rmk11}, thus is subnormal.
If $\alpha= 0$, then $\phi=\bar{g} + \beta$ is co-analytic,
in this case, $M_{\phi}$ serves as a normal extension for $S_{\phi}$, which implies that $S_{\phi}$ is subnormal.

To complete the proof of Theorem \ref{DSNC},  we will show that $S_{\phi}$ is not subnormal for $\alpha \in \mathbb{C}$ with $0<|\alpha|<1$.
Recalling that the $k$-th power of any subnormal operator is hyponormal for all $k \in \mathbb{Z}^+$,
it suffices to show that $S_{\phi}^2$ is not hyponormal.
Furthermore, since hyponormality and subnormality are invariant under translation by any scalar,
without loss of generality, we may assume that $\beta =0$ and $g(0)=0$.
Suppose that $g$ is an analytic polynomial of degree $n$, given by
$g(z)=\sum \limits_{m=1}^{n} a_{m} z^{m} $ with $a_{n} \neq 0$.
For convenience, we may assume that $\alpha \in (0, 1)$.
Indeed, if $\alpha$ is complex, we can choose a unimodular constant $\xi$ (i.e., $|\xi|=1$) such that $\alpha \xi^2$ is real. Considering the symbol $\tilde{\phi} =  \alpha (\xi g)+\overline{\xi g}=\bar{\xi}(\alpha \xi^2g+\bar{g})$ then reduces the problem to the real case, as the hyponormality of $S_{\phi}^2$ is invariant under such a rotation.

By the linearity of the dual Toeplitz operator with respect to its symbol and the identity (\ref{SH}), we have
\begin{align}\label{eqp0}
   S_{\phi}^2
   =(\alpha S_{g}+S_{\bar{g}})^2
   =\alpha^2 S_{g^2}+\alpha S_{g\bar{g}}+\alpha S_{\bar{g}}S_{g}+S_{\bar{g}^2}.
 \end{align}
 Similarly, by $S_{\phi}^*=S_{\bar{\phi}}$ and the identity (\ref{SH}), we obtain
 \begin{align}\label{eqp1}
   (S_{\phi}^2)^{*}
   =S_{\bar{\phi}}^2
   =(\bar{\alpha}S_{\bar{g}}+S_{g})^2
   =\alpha^2S_{\bar{g}^2}+\alpha S_{\bar{g}}S_{g}+\alpha S_{g\bar{g}}+S_{g^2}.
 \end{align}
To show that $S_{\phi}^2$ is not hyponormal, we seek a test function $f(z) = \bar{z}^n + t\bar{z}^{3n}$ for some $t \in \mathbb{C}$ such that $\|S_{\phi}^2 f\| < \|(S_{\phi}^2)^* f\|$.
Since
 \begin{align}\label{eqp2}
   P(z^m\bar{z}^k)
   =\left\{
    \begin{array}{lc}
        \frac{m-k+1}{m+1}z^{m-k}, & m \geq k \\
        0,  & m<k\\
    \end{array},
      \right.
 \end{align}
a direct calculation gives that
 \begin{align*}
   A:=S_{g^2}\bar{z}^n
   =(I-P)(g^2 \bar{z}^n)
   =g^2 \bar{z}^n-P(g^2 \bar{z}^n);
 \end{align*}
 \begin{align*}
   B:=S_{g\bar{g}}\bar{z}^n
   =(I-P)(g\bar{g}\bar{z}^n)
   =g\bar{g}\bar{z}^n,
 \end{align*}
 as $P(g \bar{g} \bar{z}^{n})=0$;

 \begin{align*}
   S_{\bar{g}}S_{g}\bar{z}^n
   &=S_{\bar{g}}(I-P)(g\bar{z}^n) \nonumber \\
   &=S_{\bar{g}}(g\bar{z}^n-a_n \cdot \frac{1}{n+1}) \nonumber \\
   &=(I-P)(\bar{g}g\bar{z}^n-\frac{a_n}{n+1}\bar{g}) \nonumber \\
   &=g\bar{g}\bar{z}^n-\frac{a_n}{n+1}\bar{g}\nonumber \\
   &=B-\frac{a_n}{n+1}\bar{g};
 \end{align*}

 \begin{align*}
   C&:=S_{\bar{g}^2}\bar{z}^n =(I-P)(\bar{g}^{2}\bar{z}^n)=\bar{g}^{2}\bar{z}^n;
 \end{align*}
 \begin{align*}
   D&:=S_{g^2}\bar{z}^{3n} =(I-P)(g^2 \bar{z}^{3n}) =g^2 \bar{z}^{3n};
 \end{align*}
 \begin{align*}
   E&:=S_{g\bar{g}}\bar{z}^{3n}=(I-P)(g\bar{g}\bar{z}^{3n}) =g\bar{g}\bar{z}^{3n};
 \end{align*}
 \begin{align*}
   S_{\bar{g}}S_{g}\bar{z}^{3n} &=S_{\bar{g}}(I-P)(g\bar{z}^{3n})=S_{\bar{g}}(g\bar{z}^{3n})=(I-P)(\bar{g}g\bar{z}^{3n})
   =g\bar{g}\bar{z}^{3n}=E;
 \end{align*}
 \begin{align*}
   F&:=S_{\bar{g}^2}\bar{z}^{3n}=(I-P)(\bar{g}^{2}\bar{z}^{3n}) =\bar{g}^{2}\bar{z}^{3n}.
 \end{align*}
  Therefore, we have
  \begin{align*}
    S_{\phi}^2f&=(\alpha^2 S_{g^2}+\alpha S_{g\bar{g}}+\alpha S_{\bar{g}}S_{g}+S_{\bar{g}^2})(\bar{z}^n)\\
    &+(\alpha^2 S_{g^2} +\alpha S_{g\bar{g}}+\alpha S_{\bar{g}}S_{g}+S_{\bar{g}^2})(t\bar{z}^{3n})\\
    &=\alpha^2A+\alpha B+\alpha(B-\frac{a_n}{n+1}\bar{g})+C+\alpha^2 tD+2\alpha tE+tF\\
    &=\alpha^2A+2\alpha B-\frac{\alpha a_n}{n+1}\bar{g}+C+\alpha^2 tD+2\alpha tE+tF,
  \end{align*}
   \begin{align*}
    (S_{\phi}^2)^{*}f&=(\alpha^2S_{\bar{g}^2}+\alpha S_{\bar{g}}S_{g}+\alpha S_{g\bar{g}}+S_{g^2})(\bar{z}^n)\\
    &+(\alpha^2S_{\bar{g}^2}+\alpha S_{\bar{g}}S_{g}+\alpha S_{g\bar{g}}+S_{g^2})(t\bar{z}^{3n})\\
    &=\alpha^2C+\alpha(B-\frac{a_n}{n+1}\bar{g})+\alpha B+A+\alpha^2 tF+2\alpha t E+tD\\
    &=\alpha^2C+2\alpha B-\frac{\alpha a_n}{n+1}\bar{g}+A+\alpha^2 tF+2\alpha t E+tD\\
    &=A+2\alpha B-\frac{\alpha a_n}{n+1}\bar{g}+\alpha^2C+tD+2\alpha t E+\alpha^2 tF.
  \end{align*}
Expanding the expression via the sesquilinearity of the inner product and collecting like terms, we obtain
\begin{align*}
 & \| S_{\phi}^2f\|^2 -\| (S_{\phi}^2)^{*}f\|^2 \\
 &=\langle S_{\phi}^2f,S_{\phi}^2f \rangle -\langle (S_{\phi}^2)^{*}f,(S_{\phi}^2)^{*}f \rangle\\
 &=(\alpha^4-1)\|A\|^2+2\alpha(\alpha^2-1)\langle A, B\rangle +\frac{\alpha \overline{a_n}}{n+1}(1-\alpha^2)\langle A, \bar{g}\rangle \\
 &+(\alpha^4-1)\bar{t}\langle A, D\rangle+2\alpha\bar{t}(\alpha^2-1)\langle A, E\rangle
 +2\alpha(\alpha^2-1)\langle B, A\rangle\\
 &+2\alpha(1-\alpha^2)\langle B, C\rangle+2\alpha\bar{t}(\alpha^2-1)\langle B, D\rangle+2\alpha\bar{t}(1-\alpha^2)\langle B, F\rangle\\
 &+\frac{\alpha a_n}{n+1}(1-\alpha^2)\langle \bar{g}, A\rangle+\frac{\alpha a_n}{n+1}(\alpha^2-1)\langle \bar{g}, C\rangle
 +\frac{\alpha\bar{t} a_n}{n+1}(1-\alpha^2)\langle \bar{g}, D\rangle\\
 &+\frac{\alpha\bar{t} a_n}{n+1}(\alpha^2-1)\langle \bar{g}, F\rangle +2\alpha(1-\alpha^2)\langle C, B \rangle +\frac{\alpha \overline{a_n}}{n+1}(\alpha^2-1)\langle C, \bar{g}\rangle \\
 &+(1-\alpha^4)\|C\|^2 +2\alpha\bar{t}(1-\alpha^2)\langle C, E\rangle+(1-\alpha^4)\bar{t}\langle C, F\rangle\\
 &+(\alpha^4-1)t\langle D, A\rangle+2\alpha t(\alpha^2-1)\langle D, B\rangle
 +\frac{\alpha t \overline{a_n}}{n+1}(1-\alpha^2)\langle D, \bar{g}\rangle \\
 &+|t|^2(\alpha^4-1)\|D\|^2 +2\alpha|t|^2(\alpha^2-1)\langle D, E\rangle+2\alpha t(\alpha^2-1)\langle E, A\rangle\\
 &+2\alpha t(1-\alpha^2)\langle E, C\rangle+2\alpha |t|^2(\alpha^2-1)\langle E, D\rangle+ 2\alpha |t|^2(1-\alpha^2)\langle E, F\rangle\\
 &+ 2\alpha t(1-\alpha^2)\langle F, B\rangle+\frac{\alpha t\overline{a_n}}{n+1}(\alpha^2-1)\langle F, \bar{g}\rangle
 + t(1-\alpha^4)\langle F, C\rangle\\
 &+2\alpha |t|^2(1-\alpha^2)\langle F, E\rangle +|t|^2(1-\alpha^4)\|F\|^2.
\end{align*}
By comparing terms with opposite coefficients and direct computations, we have
\begin{align*}
  \|A\|^2=\|C\|^2-\|P(g^2\bar{z}^n)\|^2; \quad  \|D\|^2=\|F\|^2;
\end{align*}
\begin{align*}
  \langle A, B\rangle=\langle B, C\rangle ; \quad \langle B, A\rangle=\langle C, B\rangle;
\end{align*}
\begin{align*}
  \langle A, D\rangle=\langle C, F\rangle ; \quad \langle D, A\rangle=\langle F, C\rangle;
\end{align*}
\begin{align*}
  \langle A, E\rangle=\langle B, F\rangle ; \quad \langle E, A\rangle=\langle F, B\rangle;
\end{align*}
\begin{align*}
  \langle B, D\rangle=\langle C, E\rangle ; \quad \langle D, B\rangle=\langle E, C\rangle;
\end{align*}
\begin{align*}
  \langle D, E\rangle=\langle E, F\rangle ; \quad \langle E, D\rangle=\langle F, E\rangle;
\end{align*}
\begin{align*}
  \langle \bar{g}, A\rangle=\langle \bar{g}, g^2\bar{z}^n-P(g^2\bar{z}^n)\rangle
  =\langle \bar{g}, g^2\bar{z}^n\rangle
  =\langle \bar{g}^3, \bar{z}^n\rangle
  =\left(\sum_{\substack{i+j+m=n \\ 1 \leq i, j, m \leq n}} \overline{a}_{i} \overline{a}_{j} \overline{a}_{m}\right)
\frac{1}{n+1};
\end{align*}
\begin{align*}
  \langle \bar{g}, C\rangle=\langle \bar{g}, \bar{g}^2\bar{z}^n\rangle =0;
\end{align*}
\begin{align*}
  \langle \bar{g}, D\rangle
  =\langle \bar{g}, g^2\bar{z}^{3n}\rangle
  =\langle \bar{g}^3, \bar{z}^{3n}\rangle
  =\overline{a_n}^3\frac{1}{3n+1};
\end{align*}
\begin{align*}
  \langle \bar{g}, F\rangle=\langle \bar{g}, \bar{g}^2\bar{z}^{3n}\rangle =0.
\end{align*}
It follows that
\begin{align*}
   \| S_{\phi}^2f\|^2 -\| (S_{\phi}^2)^{*}f\|^2
  &=(1-\alpha^4)(\|C\|^2-\|A\|^2)\\
  &+\left[ \frac{\alpha \overline{a_n}}{n+1}(1-\alpha^2)\langle A, \bar{g}\rangle+\frac{\alpha a_n}{n+1}(1-\alpha^2)\langle \bar{g}, A\rangle \right]\\
  &+\left[ \frac{\alpha t \overline{a_n}}{n+1}(1-\alpha^2)\langle D, \bar{g}\rangle+\frac{\alpha\bar{t} a_n}{n+1}(1-\alpha^2)\langle \bar{g}, D\rangle \right]\\
  &=(1-\alpha^4)\|P(g^2\bar{z}^n)\|^2+\frac{\alpha(1-\alpha^2)}{n+1}\cdot2Re(a_n\langle \bar{g}, A\rangle)\\
  &+\frac{\alpha(1-\alpha^2)}{n+1}\cdot2Re(t\overline{a_n}\langle D, \bar{g}\rangle)\\
  &=(1-\alpha^4)\|P(g^2\bar{z}^n)\|^2+\frac{2\alpha(1-\alpha^2)}{(n+1)^2}Re\left[a_n\left(\sum_{\substack{i+j+m
  =n \\ 1 \leq i, j, m \leq n}} \overline{a}_{i} \overline{a}_{j} \overline{a}_{m}\right)\right]\\
  &+\frac{2|a_n|^2\alpha(1-\alpha^2)}{(n+1)(3n+1)}Re(ta_n^2).
\end{align*}
For $0<\alpha<1$, then $\| S_{\phi}^2f\|^2 -\| (S_{\phi}^2)^{*}f\|^2 <0 $ holds if and only if
\begin{align*}\label{Inq333}
  & (1+\alpha^2)\|P(g^2\bar{z}^n)\|^2+\frac{2\alpha}{(n+1)^2}Re\left[a_n\left(\sum_{\substack{i+j+m
  =n \\ 1 \leq i, j, m \leq n}} \overline{a}_{i} \overline{a}_{j} \overline{a}_{m}\right)\right] \\
  & +\frac{2|a_n|^2\alpha}{(n+1)(3n+1)}Re(ta_n^2)<0.
\end{align*}
Since the above inequality is always solvable for $t$,
there exists $t \in \mathbb{C}$ such that $\| S_{\phi}^2f\|^2 <\| (S_{\phi}^2)^{*}f\|^2$.
The existence of such a $t$ implies that $S_{\phi}^2$ is not hyponormal,
hence $S_{\phi}$ cannot be subnormal.
This completes the proof.
\end{proof}


Recall that the class $\varTheta$ consists of all bounded linear operators $T$ acting on a separable Hilbert space $\mathcal{H}$ such that $T^{\ast}T$ commutes with $T+T^{\ast}$.
In \cite{S1985}, Sun constructed a counter-example for the problem raised by Halmos once he completely characterized $\varTheta$-class Toeplitz operators on the Hardy space.
 A Campbell's theorem was used there:
\begin{theorem}(\cite{Campbell1978})
  If $T\in\varTheta$ and $T$ is hyponormal, then $T$ is subnormal.
\end{theorem}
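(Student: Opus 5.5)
The plan is to go through the Agler-type characterization of subnormality, or more directly through the structure of $\varTheta$-class operators. Let $T\in\varTheta$ be hyponormal and set $X=T^*T-TT^*\geq 0$. The defining relation $[T^*T,T+T^*]=0$ expands to
\[
T^*T^2-T^2T^*+ (T^*)^2T-T^*TT^*\cdot 0\ldots
\]
and is cleaner written as $[T^*T,T]=-[T^*T,T^*]$. Taking adjoints, this says $[T^*T,T]$ is skew-adjoint up to sign. The first step is to derive the classical consequence (Campbell's lemma) that a $\varTheta$-class operator satisfies $[X,T]$-type identities, e.g. $T^*XT=\,$ an expression in $X$ and $T^*T$. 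The target is to show that $T$ is quasinormal on a suitable decomposition, or at least that the self-commutator behaves so that $T$ satisfies the Bram--Halmos positivity condition
\[
\sum_{i,j=0}^{k}\langle T^{i}x_j,T^{j}x_i\rangle\geq 0
\]
for all finite families $x_0,\dots,x_k$.

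Concretely, I would first show that for $T\in\varTheta$, with $S=T^*T$ and $R=T+T^*$, the operators $S$ and $R$ generate a commutative $C^*$-algebra. Next I would express $T^*{}^nT^n$ recursively as polynomials in $S$ and $R$, using $T^*T^2=(R-T^*)T^2$-type manipulations together with the commutation. This yields that the moment operators $T^{*n}T^n$ all lie in the commutative algebra generated by $S$ and $R$. Then I would apply the spectral theorem to this algebra, writing $S=\int s\,dE$ and $R=\int r\,dE$. The Bram--Halmos matrix would thereby reduce, fibrewise over the joint spectrum, to positivity of scalar moment sequences. These sequences are determined by the two-variable recursion $m_{n+1}=r\,m_n-s\,m_{n-1}$-like relations coming from $T^2=RT-T^*T$-style identities.

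The main obstacle is the last step: verifying that hyponormality ($X\geq0$) forces each fibrewise moment sequence to be a genuine Hausdorff/Stieltjes moment sequence, i.e. that the associated two-term recursion has characteristic roots giving a positive representing measure. I would first try to show, using $X\geq 0$ and commutativity, that on each fibre the roots satisfy $r^2\geq 4s$ fails only where $X=0$, so that there $T$ is normal. Elsewhere the moment sequence should be $\lambda^n$-like with positive weights, giving positivity. If this direct route stalls, the fallback is to cite Campbell's original argument via Embry's characterization: $T$ is subnormal iff there is a positive operator-valued measure $F$ with $T^{*n}T^n=\int t^{2n}\,dF(t)$. I would then construct $F$ from the joint spectral measure $E$.
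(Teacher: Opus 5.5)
The paper does not prove this statement. It quotes Campbell's theorem and uses it as a black box, so your argument has to stand on its own. As written it has a genuine gap at its central step.

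\textbf{The false step.} You claim that all moment operators $T^{*n}T^n$ lie in the commutative algebra generated by $S=T^*T$ and $R=T+T^*$. This already fails for the simplest non-normal example. Let $U$ be the unilateral shift with orthonormal basis $\{e_k\}_{k\ge 0}$, and let $T=U+a$ with $a$ real and nonzero. Then $T^*T=(1-a^2)+a(T+T^*)$, so $T\in\varTheta$. Also $[T^*,T]=e_0\otimes e_0\ge 0$, so $T$ is hyponormal (indeed subnormal). Since $T^*e_0=ae_0$,
\[
T^{*2}T^2=T^*\bigl(TT^*+[T^*,T]\bigr)T=(T^*T)^2+(T^*e_0)\otimes(T^*e_0)=(T^*T)^2+a^2\,e_0\otimes e_0 .
\]
But $e_0\otimes e_0$ does not commute with $T+T^*=U+U^*+2a$, because
\[
(U+U^*)(e_0\otimes e_0)=e_1\otimes e_0\neq e_0\otimes e_1=(e_0\otimes e_0)(U+U^*).
\]
So $T^{*2}T^2$ is not even in the commutant of $R$, let alone in the algebra generated by $S$ and $R$.

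\textbf{Why the approach cannot be repaired.} In this example $S$ is an affine function of $R$. Moreover $U+U^*$ has simple spectrum with cyclic vector $e_0$, so the von Neumann algebra generated by $S$ and $R$ is maximal abelian. Its fibres are trivial, and any operator decomposable over it is normal. Hence the Bram--Halmos matrix $(T^{*k}T^j)_{j,k}$ cannot be disintegrated fibrewise over the joint spectral measure of $(S,R)$. For the same reason, no Embry measure can be built from that spectral measure.

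The scalar recursion $m_{n+1}=r\,m_n-s\,m_{n-1}$ has the same defect. The identity $T^2=RT-S$ holds for every operator, so it carries no $\varTheta$-information. Iterating it requires moving $R$ and $S$ past powers of $T$, but $[T,R]=-[T^*,T]$ and $[S,T]=[T^*,T]\,T$, which vanish only in degenerate situations.

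Two further problems remain. The step you flag as the main obstacle is only a heuristic. The fallback of ``citing Campbell's original argument'' is circular, since that is the statement being proved.

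\textbf{What is missing.} The hypothesis needs the correct reformulation. Put $D=T^*T-TT^*$. Then
\[
[T^*T,T]=DT \quad\text{and}\quad [T^*T,T^*]=-T^*D,
\]
so $T\in\varTheta$ is equivalent to $DT=T^*D$. In particular $[T^*T,T]$ is self-adjoint, not ``skew-adjoint up to sign.'' By induction, $DT^n=T^{*n}D$ for all $n$.

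This relation, combined with $D\ge 0$, is the structure a proof must exploit. It gives the following facts:
\begin{enumerate}
\item $\ker D$ is invariant under $T$.
\item The closed range of $D$ is invariant under $T^*$.
\item $\langle DT^jx,T^kx\rangle=\langle Dx,T^{j+k}x\rangle$ depends only on $j+k$. Hyponormality therefore makes these positive semidefinite Hankel forms.
\end{enumerate}
Your sketch uses hyponormality only inside the unresolved final step, and it never touches this relation.
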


Motivated by Sun's work \cite{S1985}, and in light of Question \ref{Q1} formulated above, we extend this investigation to the context of $\varTheta$-class dual Toeplitz operators.

For $x, y$ in the same Hilbert space $\mathcal{H}$, let $x \otimes y$ be the rank one operator defined by $(x \otimes y)(z)=\langle z, y \rangle x$ for $z \in \mathcal{H}$. It is easy to verified that $\|x \otimes y\|=\|x\|\|y\|$. If $T$ and $S$ are bounded linear operators on $\mathcal{H}$, then $T(x \otimes y)S^{*}=(Tx) \otimes (Sy)$.
The rank one operator $x \otimes y$ is self-adjoint if and only if there exists $a \in \mathbb{R}$ such that $y=ax$.

For $\varTheta$-class dual Toeplitz operators with bounded harmonic symbols, we obtained the following characterization.
Surprisingly, the $\varTheta$-class membership of dual Toeplitz operators differs significantly from that of Toeplitz operators on the Hardy space.
\begin{theorem}
Let $\phi=f+\bar{g}$ be a bounded harmonic function on $\mathbb{D}$ with $f$ and $g$ analytic. Then the following conditions are equivalent:\\
(a) $S_{\phi} \in \varTheta$;\\
(b) There exist $\alpha, \beta \in \mathbb{C}$ with $|\alpha|=1$ such that $\phi= \alpha g+\bar{g}+\beta$;\\
(c) $S_{\phi}$ is normal.
\end{theorem}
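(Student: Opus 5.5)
The implications (b)$\Rightarrow$(c) and (c)$\Rightarrow$(a) are essentially free. If $\phi=\alpha g+\bar g+\beta$ with $|\alpha|=1$, then Remark \ref{rmk11} (equivalently the normality criterion of \cite{SZ2002}) gives that $S_\phi$ is normal. A normal operator $T$ satisfies $T^*T=TT^*$, hence $T^*T(T+T^*)=T^*T^2+T^*TT^*=T^2T^*+TT^*T^*\cdot$ — more simply, $T^*T$ commutes with both $T$ and $T^*$ by Fuglede (or directly by normality). So $S_\phi\in\varTheta$. The real content is (a)$\Rightarrow$(b).

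For (a)$\Rightarrow$(b), I will compute the commutator $[S_\phi^*S_\phi,\,S_\phi+S_\phi^*]$ with the identity (\ref{SH}). We have $S_\phi^*S_\phi=S_{|\phi|^2}-H_{\bar\phi}H_{\bar\phi}^*$ and $S_\phi+S_\phi^*=S_{\phi+\bar\phi}$. Since $\phi+\bar\phi$ is real, one can instead exploit that $T\in\varTheta$ iff $T^*T$ commutes with $T+T^*$, iff $T^*[T^*,T]\,$-type identities hold. Concretely, $T^*T(T+T^*)=(T+T^*)T^*T$ rearranges to $T^*[T,T^*]=[T,T^*]\,T$ up to adjoints, i.e. $T^*(TT^*-T^*T)=(TT^*-T^*T)T$. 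With (\ref{eqnf0}), $S_\phi^*S_\phi-S_\phi S_\phi^*=H_\phi H_\phi^*-H_{\bar\phi}H_{\bar\phi}^*=:K$, so the condition becomes $S_{\bar\phi}K=KS_\phi$.

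I will test this identity on concrete vectors. For a harmonic $\phi=f+\bar g$ we have $H_\phi=H_{\bar g}$ and $H_{\bar\phi}=H_{\bar f}$. For bounded analytic test functions, $H_{\bar g}^*$ acts nicely: on $(L^2_a)^\perp$, elements of the form $\bar z^k$ (or $\overline{zh}$) are natural. The plan is to apply both sides to $\bar z^k$ and pair against $\bar z^j$ or against $\overline{g}$-type vectors. The goal is to extract an identity relating $f'$ and $g'$ forcing $f=\alpha g+\beta$. Before doing this I will try a cleaner reduction, namely showing that $\varTheta$ membership forces hyponormality or co-hyponormality. Then Theorem \ref{Cor00} applies: $\phi=\alpha g+\bar g+\beta$ with $|\alpha|\le1$, or (from $S_{\bar\phi}$ hyponormal) the symmetric form. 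In that case Campbell's theorem makes $S_\phi$ subnormal. For $0<|\alpha|<1$ the proof of Theorem \ref{DSNC} (for polynomial $g$) shows $S_\phi$ is not subnormal; for general $g$ I would rely on the rank-one structure instead.

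The main obstacle is exploiting $S_{\bar\phi}K=KS_\phi$ for general bounded harmonic symbols. I would first handle the co-analytic case $f=0$, where $K=H_{\bar g}H_{\bar g}^*$ is nonzero. There $S_{\bar\phi}=S_g$, and $K$ applied to $\bar z$ gives a rank-one-like expression $\frac12\,\overline{g'(0)}$-weighted terms. This shows that $\varTheta$ fails unless $g$ is constant, which excludes the $\alpha=0$ case. The paper's introduction of $x\otimes y$ and self-adjoint rank-one operators suggests the key computation. On the subspace spanned by $\bar z$, the restriction $P_{\bar z}\,(S_{\bar\phi}K-KS_\phi)$ reduces, via $H_{\bar g}^*\bar z=\overline{g'(0)}/2\cdot$const and $H_{\bar g}1=\bar g-\overline{g(0)}$, to an operator of the form $x\otimes y-y\otimes x$ with $x=\bar g-\overline{g(0)}$ and $y=\bar f-\overline{f(0)}$ (after normalization). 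Requiring it to vanish, or to be self-adjoint, forces $y=\lambda x$, i.e. $f=\alpha g+\beta$. Finally, feeding $f=\alpha g+\beta$ back into the commutator identity produces a factor $(1-|\alpha|^2)$ times a nonzero operator whenever $g$ is nonconstant, which forces $|\alpha|=1$. I expect the bookkeeping of these compressions to be the hardest part.
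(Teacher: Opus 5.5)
Your implications (b)$\Rightarrow$(c)$\Rightarrow$(a) are fine. Your reformulation of (a) as $S_{\bar\phi}K=KS_\phi$ with $K=H_{\bar g}H_{\bar g}^*-H_{\bar f}H_{\bar f}^*$ is also correct, and it is more economical than the paper's full expansion of $[S_\phi^*S_\phi,S_\phi^*+S_\phi]$. The gap is that you never give a mechanism that turns this operator identity into something solvable, and the concrete steps you sketch do not work. With $g=\sum_m a_mz^m$ one has $H_{\bar g}^*\bar z=P(g\bar z)=\sum_{m\ge 1}\frac{m}{m+1}a_mz^{m-1}$. This is the Bergman backward shift of $g$, not a constant (only its value at $0$ is $g'(0)/2$), so $K\bar z$ is not ``rank-one-like''. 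Moreover, compressing to $\mathrm{span}\{\bar z\}$ produces a single scalar, not an operator of the form $x\otimes y-y\otimes x$.

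The missing idea is the paper's map $T\mapsto T-2S_zTS_{\bar z}+S_z^2TS_{\bar z}^2$. Since $S_zH_\varphi=H_\varphi T_z$ by (\ref{HST}), and $I-2T_zT_z^*+T_z^2T_z^{*2}=1\otimes 1$ on $L^2_a$, this map sends every product $H_\varphi H_\psi^*$ to $(H_\varphi 1)\otimes(H_\psi 1)$. Before applying it you must rewrite $S_{\bar\phi}K-KS_\phi$ as a combination of such Hankel products. For example, $S_{\bar\phi}H_{\bar g}H_{\bar g}^*=H_{\bar\phi\bar g}H_{\bar g}^*-H_{\bar\phi}H_{|g|^2}^*$, using (\ref{HST}) and $T_{\bar g}H_{\bar g}^*=H_{|g|^2}^*$. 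Done this way, your reformulation leads exactly to the paper's identity (\ref{H2}).

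Even with (\ref{H2}) in hand, your endgame is too optimistic. Set $u=H_{\bar f}1$, $v=H_{\bar g}1$, $X=H_{\bar f^2+\bar fg+|f|^2-|g|^2}1$ and $Y=H_{\bar f\bar g+|g|^2}1$. The identity then reads $X\otimes u-u\otimes X=Y\otimes v-v\otimes Y$, so rank considerations alone do not force $u$ and $v$ to be dependent.

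The paper needs an additional analytic input. Whenever a relation $H_\psi1=0$ is obtained, $\psi$ is analytic, so $\Delta\psi=0$. This kills the anti-analytic terms and turns $|f|^2,|g|^2,\bar fg$ into multiples of $|f'|^2,|g'|^2,\overline{f'}g'$. That is what excludes the cases with $f$ or $g$ constant and the case of independent $u,v$. After a self-adjointness argument for the remaining rank-one operator, it gives $(\bar\lambda+1)(|\lambda|^2-1)|g'|^2=0$. Your ``factor $(1-|\alpha|^2)$ times a nonzero operator'' is asserted rather than derived.

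Your fallback route does not close the gap either. You give no argument that $\varTheta$-membership forces $S_\phi$ to be hyponormal or co-hyponormal. Campbell's theorem says nothing in the co-hyponormal branch. Finally, the paper's non-subnormality argument for $0<|\alpha|<1$ covers only polynomial $g$.
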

\begin{proof}
It follows from Corollary 2.7 in \cite{SZ2002} or Remark \ref{rmk11} that $(b)\Rightarrow (c)$. Since $(c) \Rightarrow (a)$ is obvious,
we only need to prove $(a)\Rightarrow (b)$.
Suppose that $S_{\phi} \in \varTheta$, then
$$S^{*}_{\phi}S_{\phi}(S^{*}_{\phi}+S_{\phi})-(S^{*}_{\phi}+S_{\phi})S^{*}_{\phi}S_{\phi}=0.$$
A direct calculation gives that
\begin{align*}
  &\quad(S^{*}_{\phi}S_{\phi})(S^{*}_{\phi}+S_{\phi})=S^{*}_{\phi}S_{\phi}S^{*}_{\phi}+ S^{*}_{\phi}S_{\phi}S_{\phi}\\
  &=(S_{\bar{f}}+S_g)(S_{f}+S_{\bar{g}})(S_{\bar{f}}+S_g)+(S_{\bar{f}}+S_g)(S_{f}+S_{\bar{g}})(S_{f}+S_{\bar{g}})\\
  &=S_{\bar{f}}S_{|f|^2}+S_{\bar{f}}S_{fg}+S_{\bar{f}^2\bar{g}}+S_{\bar{f}\bar{g}}S_g
  +S_{|f|^2g}+S_{fg^2}+S_{\bar{f}|g|^2}+S_{|g|^2}S_g\\
  &+S_{\bar{f}}S_{f^2}+S_{\bar{f}}S_{f\bar{g}}+S_{\bar{f}\bar{g}}S_f+S_{\bar{f}\bar{g}^2}
  +S_{f^2g}+S_{f|g|^2}+S_{|g|^2}S_f+S_{\bar{g}|g|^2}
\end{align*}
and
\begin{align*}
  &\quad(S^{*}_{\phi}+S_{\phi})(S^{*}_{\phi}S_{\phi})=[(S^{*}_{\phi}S_{\phi})(S^{*}_{\phi}+S_{\phi})]^{*}\\
  &=S_{|f|^2}S_f+S_{\bar{f}\bar{g}}S_f+S_{f^2g}+S_{\bar{g}}S_{fg}
  +S_{|f|^2\bar{g}}+S_{\bar{f}\bar{g}^2}+S_{f|g|^2}+S_{\bar{g}}S_{|g|^2}\\
  &+S_{\bar{f}^2}S_f+S_{\bar{f}g}S_f+S_{\bar{f}}S_{fg}+S_{fg^2}
  +S_{\bar{f}^2\bar{g}}+S_{\bar{f}|g|^2}+S_{\bar{f}}S_{|g|^2}+S_{g|g|^2}.
\end{align*}
It follows that
\begin{align*}
 &\quad[S^{*}_{\phi}S_{\phi}, S^{*}_{\phi}+S_{\phi}]\\
 &=S_{\bar{f}}S_{|f|^2}+S_{\bar{f}\bar{g}}S_g+S_{|f|^2g}+S_{|g|^2}S_g
 +S_{\bar{f}}S_{f^2}+S_{\bar{f}}S_{f\bar{g}}+S_{|g|^2}S_f+S_{\bar{g}|g|^2}\\
 &-(S_{|f|^2}S_f+S_{\bar{g}}S_{fg}+S_{|f|^2\bar{g}}+S_{\bar{g}}S_{|g|^2}
 +S_{\bar{f}^2}S_f+S_{\bar{f}g}S_f+S_{\bar{f}}S_{|g|^2}+S_{g|g|^2})\\
 &=(S_{\bar{f}}S_{|f|^2}-S_{\bar{f}^2}S_f)+(S_{\bar{f}\bar{g}}S_g-S_{\bar{f}}S_{|g|^2})\\
 &+(S_{|f|^2g}-S_{\bar{f}g}S_f)+(S_{|g|^2}S_g-S_{g|g|^2})\\
 &+(S_{\bar{f}}S_{f^2}-S_{|f|^2}S_f)+(S_{\bar{f}}S_{f\bar{g}}-S_{|f|^2\bar{g}})\\
 &+(S_{|g|^2}S_f-S_{\bar{g}}S_{fg})+(S_{\bar{g}|g|^2}-S_{\bar{g}}S_{|g|^2}).
\end{align*}
Since
\begin{align*}
  S_{\bar{f}|f|^2}&=S_{\bar{f}}S_{|f|^2}+H_{\bar{f}}H_{|f|^2}^{*}=S_{\bar{f}^2}S_f + H_{\bar{f}^2}H_{\bar{f}}^{*};\\
  S_{\bar{f}|g|^2}&=S_{\bar{f}\bar{g}}S_g+H_{\bar{f}\bar{g}}H_{\bar{g}}^{*}=S_{\bar{f}}S_{|g|^2}+H_{\bar{f}}H_{|g|^2}^{*};\\
  S_{g|f|^2}&=S_{\bar{f}g}S_f+H_{\bar{f}g}H_{\bar{f}}^{*};\\
  S_{g|g|^2}&=S_{|g|^2}S_g+H_{|g|^2}H_{\bar{g}}^{*};\\
  S_{f|f|^2}&=S_{\bar{f}}S_{f^2}+H_{\bar{f}}H_{\bar{f}^2}^{*}=S_{|f|^2}S_f+H_{|f|^2}H_{\bar{f}}^{*};\\
  S_{\bar{g}|f|^2}&=S_{\bar{f}}S_{f\bar{g}}+H_{\bar{f}}H_{\bar{f}g}^{*};\\
  S_{f|g|^2}&=S_{|g|^2}S_f+H_{|g|^2}H_{\bar{f}}^{*}=S_{\bar{g}}S_{fg}+H_{\bar{g}}H_{\bar{f}\bar{g}}^{*};\\
  S_{\bar{g}|g|^2}&=S_{\bar{g}}S_{|g|^2}+H_{\bar{g}}H_{|g|^2}^{*},
\end{align*}
we obtain
\begin{equation}\label{H1}
  \begin{aligned}
  0&=[S^{*}_{\phi}S_{\phi}, S^{*}_{\phi}+S_{\phi}]\\
  &=(H_{\bar{f}^2}H_{\bar{f}}^{*}-H_{\bar{f}}H_{|f|^2}^{*})+(H_{\bar{f}}H_{|g|^2}^{*}-H_{\bar{f}\bar{g}}H_{\bar{g}}^{*})\\
  &+H_{\bar{f}g}H_{\bar{f}}^{*}-H_{|g|^2}H_{\bar{g}}^{*}
  +(H_{|f|^2}H_{\bar{f}}^{*}-H_{\bar{f}}H_{\bar{f}^2}^{*})\\
  &-H_{\bar{f}}H_{\bar{f}g}^{*}
  +(H_{\bar{g}}H_{\bar{f}\bar{g}}^{*}-H_{|g|^2}H_{\bar{f}}^{*})+H_{\bar{g}}H_{|g|^2}^{*}.
 \end{aligned}
\end{equation}

Let $\mathcal{S}_w(T)=T-S_{\varphi_{w}}TS_{\bar{\varphi}_{w}}$ for any bounded linear operator $T$ on the orthogonal complement of the Bergman space, where $\varphi_{w}(z)=\frac{w-z}{1-\bar{w}z}$ is the M\"obius transform on $\mathbb{D}.$
Then $\mathcal{S}_w^2(T)=T-2S_{\varphi_{w}}TS_{\bar{\varphi}_{w}}+S_{\varphi_{w}}^2TS_{\bar{\varphi}_{w}}^2$
and $\mathcal{S}_w^2(H_{\varphi}H_{\psi}^{*})=H_{\varphi}(k_w\otimes k_w)H_{\psi}^{*}=(H_{\varphi}k_w)\otimes (H_{\psi}k_w)$ by \cite{SZ1999}, where $k_w(z)=\frac{1-|w|^2}{(1-\bar{w}z)^2}$ is the normalized reproducing kernel of the Bergman space. Let $w=0$, then $k_0(z)=1$.
Apply $\mathcal{S}_0^2$ to both sides of the identity (\ref{H1}), then we obtain
\begin{align*}
  0&=[(H_{\bar{f}^2}1)\otimes (H_{\bar{f}}1)-(H_{\bar{f}}1)\otimes (H_{|f|^2}1)]+[(H_{\bar{f}}1)\otimes (H_{|g|^2}1)-(H_{\bar{f}\bar{g}}1)\otimes (H_{\bar{g}}1)]\\
  &+(H_{\bar{f}g}1)\otimes (H_{\bar{f}}1)-(H_{|g|^2}1)\otimes (H_{\bar{g}}1)
  +[(H_{|f|^2}1)\otimes (H_{\bar{f}}1)-(H_{\bar{f}}1)\otimes (H_{\bar{f}^2}1)]\\
  &-(H_{\bar{f}}1)\otimes (H_{\bar{f}g}1)
  +[(H_{\bar{g}}1)\otimes (H_{\bar{f}\bar{g}}1)-(H_{|g|^2}1)\otimes (H_{\bar{f}}1)]+(H_{\bar{g}}1)\otimes (H_{|g|^2}1).
\end{align*}
Therefore
\begin{align*}
  0&=(H_{\bar{f}^2}1+H_{\bar{f}g}1+H_{|f|^2}1-H_{|g|^2}1)\otimes (H_{\bar{f}}1)\\
  &+(H_{\bar{f}}1)\otimes (-H_{|f|^2}1+H_{|g|^2}1-H_{\bar{f}g}1-H_{\bar{f}^2}1)\\
  &+(-H_{\bar{f}\bar{g}}1-H_{|g|^2}1)\otimes (H_{\bar{g}}1)
  +(H_{\bar{g}}1)\otimes (H_{\bar{f}\bar{g}}1+H_{|g|^2}1),
\end{align*}
equivalently,
\begin{equation}
  \begin{aligned}\label{H2}
   &(H_{\bar{f}^2+\bar{f}g+|f|^2-|g|^2}1)\otimes (H_{\bar{f}}1)
   +(H_{\bar{g}}1)\otimes (H_{\bar{f}\bar{g}+|g|^2}1)\\
   &=(H_{\bar{f}}1)\otimes (H_{|f|^2-|g|^2+\bar{f}g+\bar{f}^2}1)
   +(H_{\bar{f}\bar{g}+|g|^2}1)\otimes (H_{\bar{g}}1).
 \end{aligned}
\end{equation}

In the following, we will solve the operator equation (\ref{H2}) with at most rank two into four cases:

Case 1. $H_{\bar{f}}1=0$ and $H_{\bar{g}}1=0$:

In this case, $\bar{f}$ and $\bar{g}$ are analytic, while $f$ and $g$ are analytic, then $f, g$ are constants, it follows that $\phi$ is a constant.

Case 2. $H_{\bar{f}}1=0$ and $H_{\bar{g}}1\neq 0$:

$H_{\bar{f}}1=0$ gives that $f$ is co-analytic. Since $f$ is analytic, $f$ is a constant. Equation (\ref{H2})
gives that
\[
(H_{\bar{g}}1)\otimes (H_{\bar{f}\bar{g}+|g|^2}1)= (H_{\bar{f}\bar{g}+|g|^2}1)\otimes (H_{\bar{g}}1)
\]
which means that $(H_{\bar{g}}1)\otimes (H_{\bar{f}\bar{g}+|g|^2}1)$ is a self-adjoint operator with rank one.
It follows that there exists a real constant $\lambda$ such that
$H_{\bar{f}\bar{g}+|g|^2}1=\lambda H_{\bar{g}}1$.
Then $\bar{f}\bar{g}+|g|^2-\lambda\bar{g}$ is analytic. So $\Delta(\bar{f}\bar{g}+|g|^2-\lambda\bar{g}) =0$, where $\Delta=4\frac{\partial^2}{\partial z \partial \bar{z}}$ is the Laplacian operator. Therefore, $|g'|^2=0$, which gives that $g'=0$, thus $g$ is a constant function. It is a contradiction to the condition that $H_{\bar{g}}1\neq 0$. Hence this case cannot happen.

Case 3. $H_{\bar{f}}1 \neq 0$ and $H_{\bar{g}}1= 0$:

The equation $H_{\bar{g}}1= 0$ combining with the fact that $g$ is analytic implies that $g$ is a constant.
Equation (\ref{H2}) now becomes
\[
(H_{\bar{f}^2+\bar{f}g+|f|^2-|g|^2}1)\otimes (H_{\bar{f}}1)=(H_{\bar{f}}1)\otimes [(H_{|f|^2-|g|^2+\bar{f}g+\bar{f}^2}1)
\]
which means that $(H_{\bar{f}^2+\bar{f}g+|f|^2-|g|^2}1)\otimes (H_{\bar{f}}1)$ is a rank-one self-adjoint operator.
Thus there exists some real constant $\lambda$ such that
$H_{\bar{f}^2+\bar{f}g+|f|^2-|g|^2}1=\lambda H_{\bar{f}}1$.
It follows that $\bar{f}^2+\bar{f}g+|f|^2-|g|^2-\lambda \bar{f}$ is analytic,
which implies that $\Delta(\bar{f}^2+\bar{f}g+|f|^2-|g|^2-\lambda \bar{f})=0$.
Then we have $|f'|^2=0$, which gives that $f'=0$, thus $f$ is a constant. It is a contradiction to the condition $H_{\bar{f}}1 \neq 0$. Therefore this case cannot happen.

Case 4. $H_{\bar{f}}1 \neq 0$ and $H_{\bar{g}}1 \neq 0$:

There are two possibilities in this case as follows.

Case 4.1. $H_{\bar{f}}1$ and $H_{\bar{g}}1$ are linearly dependent.

That is, there exists a constant $\lambda$ such that
$H_{\bar{f}}1=\lambda H_{\bar{g}}1$. Hence $\bar{f}-\lambda \bar{g}$ is analytic. Since both $f$ and $g$ are analytic, we have $\bar{f}-\lambda \bar{g}$ is co-analytic, which gives that $\bar{f}-\lambda \bar{g}$ is a constant.
Therefore, there exists $c \in \mathbb{C}$ such that $\bar{f}-\lambda \bar{g}=c$,
or equivalently, $f=\bar{\lambda} g+\bar{c}$.
Thus Equation (\ref{H2}) gives that
\[
(H_{\bar{\lambda}(\bar{f}^2+\bar{f}g+|f|^2-|g|^2)-(\bar{f}\bar{g}+|g|^2)}1)\otimes (H_{\bar{g}}1)
   =(H_{\bar{g}}1)\otimes (H_{\bar{\lambda}(|f|^2-|g|^2+\bar{f}g+\bar{f}^2)-(\bar{f}\bar{g}+|g|^2)}1),
\]
which means that $(H_{\bar{\lambda}(\bar{f}^2+\bar{f}g+|f|^2-|g|^2)-(\bar{f}\bar{g}+|g|^2)}1)\otimes (H_{\bar{g}}1)$
is a self-adjoint operator. Thus there exists $a \in \mathbb{R}$ such that
\[
H_{\bar{\lambda}(\bar{f}^2+\bar{f}g+|f|^2-|g|^2)-(\bar{f}\bar{g}+|g|^2)}1=a H_{\bar{g}}1.
\]
Subsequently, $\bar{\lambda}(\bar{f}^2+\bar{f}g+|f|^2-|g|^2)-(\bar{f}\bar{g}+|g|^2)-a \bar{g}$ is analytic.
Hence $\Delta (\bar{\lambda}(\bar{f}^2+\bar{f}g+|f|^2-|g|^2)-(\bar{f}\bar{g}+|g|^2)-a \bar{g})=0$,
it follows that
\[
\bar{\lambda}(\bar{f}'g'+|f'|^2-|g'|^2)-|g'|^2=0.
\]
Combining the relation $f=\bar{\lambda}g+\bar{c}$ with the above identity, we have
\[
\bar{\lambda}(\lambda |g'|^2+|\lambda|^2|g'|^2-|g'|^2)-|g'|^2=0
\]
that is,
\[
[\bar{\lambda}(\lambda +|\lambda|^2-1)-1]|g'|^2=0.
\]
Since $H_{\bar{g}}\neq 0$, $g$ is not constant, we have $|g'| \neq 0$.
It follows that
\[
\bar{\lambda}(\lambda +|\lambda|^2-1)-1=0.
\]
Thus $(\bar{\lambda}+1)(|\lambda|^2-1)=0$, which gives that $|\lambda|=1$, that is, $\lambda$ is a unimodular constant.
Let $\alpha=\bar{\lambda}$ and $\beta=\bar{c}$. Hence $\phi = \alpha g+\bar{g}+ \beta $ with $\alpha, \beta \in \mathbb{C}$ and $|\alpha|=1$.

Case 4.2. $H_{\bar{f}}1$ and $H_{\bar{g}}1$ are linearly independent.

In this case, $H_{\bar{g}}1 \neq 0$ gives that $H_{\bar{f}}1$ and $H_{\bar{f}\bar{g}+|g|^2}1$ are linearly independent.
Then Equation (\ref{H2}) gives that there exist constants $a, b, c, d$ such that
 \begin{align}
   H_{\bar{f}^2+\bar{f}g+|f|^2-|g|^2}1&= aH_{\bar{f}}1+bH_{\bar{f}\bar{g}+|g|^2}1, \label{E1}\\
   H_{\bar{g}}1&=cH_{\bar{f}}1+dH_{\bar{f}\bar{g}+|g|^2}1 \label{E2}.
 \end{align}
It follows from (\ref{E2}) that $\bar{g}-c\bar{f}-d(\bar{f}\bar{g}+|g|^2)$ is analytic.
Hence $\Delta(\bar{g}-c\bar{f}-d(\bar{f}\bar{g}+|g|^2))=0$,
which implies that $d|g'|=0$, thus $d=0$ as $g$ is not constant.
By (\ref{E2}) again, we have $H_{\bar{g}}1=cH_{\bar{f}}1$, that is, $H_{\bar{f}}1$ and $H_{\bar{g}}1$ are linearly dependent, a contradiction.
Therefore, Case 4.2 cannot happen.
This completes the proof.
\end{proof}



{\bf Acknowledgments.} This work was partially supported by the Scientific and Technological Research Program of Chongqing Municipal Education Commission (Grant No. KJQN202400829), the Chongqing Natural Science Foundation (Grant No. CSTB2025NSCQ-GPX0813) and the China Scholarship Council (Grant No. 202402380005).

\bibliographystyle{amsplain}

\end{document}